\DeclareMathSymbol{\shortminus}{\mathbin}{AMSa}{"39}
\crefname{equation}{}{}
\newtheorem{theorem}{Theorem}[section]
\newtheorem{lemma}[theorem]{Lemma}
\newtheorem{definition}[theorem]{Definition}
\newtheorem{corollary}[theorem]{Corollary}
\newtheorem{conjecture}[theorem]{Conjecture}
\newtheorem{proposition}[theorem]{Proposition}
\newcommand{\N}{\mathcal{N}}
\newcommand{\mG}{\mathcal{G}}
\newcommand{\p}{\mathcal{P}}
\newcommand{\abs}[1]{\left\lvert{#1}\right\rvert}
\newcommand{\floor}[1]{\left\lfloor{#1}\right\rfloor}
\DeclareMathOperator{\ex}{ex}
\newcommand{\ao}{%
%	\alpha \hspace{-2pt} \omega
  \:\mathrel{%
    \vcenter{\offinterlineskip
      \ialign{##\cr$\alpha$\cr\noalign{\kern1.7pt}$\omega$\cr} %
    }%
  }%
  \!
}
\title{Generalized Planar Tur\'an Numbers}
 \author{
Ervin Gy\H{o}ri\thanks{Alfr\'ed R\'enyi Institute of Mathematics, Hungarian Academy of Sciences. email: \texttt{gyori.ervin@renyi.mta.hu}} \footnotemark[3] \and
Addisu Paulos\thanks{Addis Ababa University, Addis Ababa. email: \texttt{addisu\_2004@yahoo.com}} \thanks{Central European University, Budapest.} \and
Nika Salia\thanks{Alfr\'ed R\'enyi Institute of Mathematics, Hungarian Academy of Sciences. email: \texttt{salia.nika@renyi.hu}} \footnotemark[3] \and 
Casey Tompkins\thanks{Discrete Mathematics Group, Institute for Basic Science (IBS), Daejeon, Republic of Korea.} \thanks{Karlsruhe Institute of Technology, Karlsruhe, Germany. email:\texttt{ctompkins496@gmail.com}} \and 
Oscar Zamora \footnotemark[3] \thanks{Universidad de Costa Rica, San Jos\'e. email: \texttt{oscarz93@yahoo.es}}
\and
}
\begin{document}
\maketitle

\begin{abstract}
In a generalized Tur\'an problem, we are given graphs $H$ and $F$ and seek to maximize the number of copies of $H$ in an $F$-free graph of order $n$. We consider generalized Tur\'an problems where the host graph is planar. In particular we obtain the order of magnitude of the maximum number of copies of a fixed tree in a planar graph containing no even cycle of length at most $2\ell$, for all $\ell$, $\ell \geq 1$. We obtain the order of magnitude of the maximum number of cycles of a given length in a planar $C_4$-free graph. 
An exact result is given for the maximum number of $5$-cycles in a $C_4$-free planar graph. 
Multiple conjectures are also introduced. 
\end{abstract}

\section{Introduction}
For a fixed graph $F$, the classical extremal function $\ex(n,F)$ is defined to be the maximum number of edges possible in an $n$-vertex graph not containing $F$ as a subgraph.  This function naturally generalizes to a setting where, rather than edges,  we maximize the number of copies of a given graph $H$ in an $n$-vertex $F$-free graph.  Following Alon and Shikhelman~\cite{ALS2016} (see also~\cite{add}), we denote this more general function by $\ex(n,H,F)$.
Problems of this type have a long history beginning with a result of Zykov~\cite{zykov} (and later independently Erd\H{o}s~\cite{erdos}) who determined the value of $\ex(n,K_r,K_t)$ for any pair of cliques. After these initial results a variety of other results of this type were obtained, perhaps the most well-known of which being the determination of $\ex(n,C_5,C_3)$ by Hatami, Hladk\'y, Kr\'al, Norine and Razborov~\cite{HHKNR2013} and independently by Grzesik~\cite{G2012}. Many other results about generalized extremal numbers have also been obtained. 
For example, see~\cite{bb,gerbpalm,let,luo}.

%The generalized extremal functions $\ex(n,P_\ell,P_k)$ and $\ex(n,C_\ell,C_k)$ have been considered in [x,y] and z, respectively.

In a different direction, extremal problems have been considered for host graphs other than $K_n$. Examples include the Zarankiewicz problem where the host graph is taken to be a complete bipartite graph, or extremal problems on the hypercube $Q_n$ initiated by Erd\H{o}s~\cite{cube}. 
More recently, extremal problems have been considered where the host graph is taken to be a planar graph. 
For a given graph $F$, let us denote the maximum number of edges in an $n$-vertex $F$-free planar graph by $\ex_{\p}(n,F)$. 
This topic was initiated by Dowden in ~\cite{dowden} who determined  $\ex_{\p}(n,C_4)$ and $\ex_{\p}(n,C_5)$.  
A variety of other forbidden graphs $F$ including stars, wheels and fans were considered by Lan, Shi and Song~\cite{lan2}. The case of theta graphs was considered in Lan, Shi and Song~\cite{lan3}, and the case of short paths was considered by Lan and Shi in~\cite{lan1}. Some closely related anti-Ramsey problems were considered in~\cite{lan4} and~\cite{lan5}.  

Another direction of research which has been considered is maximizing the number of copies of a given graph in an $n$-vertex planar graph.  
Hakimi and Schmeichel~\cite{hakimi} determined the maximum number of triangle and $C_4$ copies possible in a planar graph.  
These results were extended by Alon and Caro~\cite{alon} who determined the maximum number of $K_{1,t}$ and $K_{2,t}$ copies possible. 
Alon and Caro also proved that the maximum number of $K_4$ copies in a planar graph is $n-3$. Resolving a conjecture attributed to Perles in~\cite{alon},  Wormald~\cite{Wormald} proved that every 3-connected graph $H$ occurs at most $c_H n$ times in an $n$-vertex planar graph for some constant $c_H$ depending on $H$ (this result was proved again in a different way by Eppstein~\cite{eppstein}).  
A simple argument shows that graphs with at least 3 vertices which are at most 2-connected will occur at least quadraticly many times in a planar graph. 
Thus the preceding result of Wormald and Eppstein provides a characterization of graphs which can occur at most $O(n)$ times in a planar graph.  Recently, resolving a conjecture of Hakimi and Schmeichel~\cite{hakimi}, the maximum number of $5$-cycles in a planar graph was determined in~\cite{c5}. 
Similarly, the maximum number of paths of length three was determined in~\cite{p3}. 

It is interesting to note that the problem of maximizing $H$ copies in a planar graph is in some sense a special case of the problem of Alon and Shikelman~\cite{ALS2016}.  Indeed, for a given graph $H$, and the collection $\mathcal{F}$ of subdivisions of $K_5$ and $K_{3,3}$, it follows from Kuratowski's~\cite{Kuratowski} theorem that $\ex(n,H,\mathcal{F})$ is equal to the maximum number of $H$-copies in an $n$-vertex planar graph.

In this paper we will consider a common generalization of the types of problems mentioned above. To this end, let $\ex_{\p}(n,H,\mathcal{F})$ denote the maximum number of copies of $H$ possible in an $n$-vertex planar graph containing no graph $F \in \mathcal{F}$ as a subgraph (we write simply $\ex_{\p}(n,H,F)$ in the case $\mathcal{F} = \{F\}$).  In particular, the problems considered in the preceding paragraphs correspond to the special cases of $\ex_{\p}(n,K_2,F)$ and $\ex_{\p}(n,H,\emptyset)$, for given graphs $F$ and $H$.

\section{Notation and Results}
For a graph $G$, we denote the vertex set and edge set of $G$ by $V(G)$ and $E(G)$ respectively. Also we let $v(G)=|V(G)|$ and $e(G)=|E(G)|$. For a vertex $v\in V(G)$, the degree of $v$ is denoted by $d(v)$, and the minimum degree of the graph $G$ is denoted by $\delta(G)$. We use the notation $C_t$ and $K_t$ respectively for the cycle and complete graph of order~$t$.  We denote the path of length $t$ (that is, the path with $t$ edges) by $P_t$.  For a graph $G$ and an independent set $S \subseteq V(G)$, the graph obtained by blowing $S$ up by $t$ is the graph formed by replacing each vertex $v\in S$ with $t$ vertices each with the same neighbors as $v$.

In the general case, Eppstein~\cite{eppstein} asked whether for all $H$, the maximum number of copies of $H$ possible in a minor-closed family of $n$-vertex graphs is an integer power of $n$. We state a restricted version of his problem as a conjecture in the planar case, and later generalize it further.  
\begin{conjecture} \label{Con:integer_empty}
For every graph $H$, there exists a non-negative integer $k$, such that \[\ex_{\p}(n,H,\emptyset)=\Theta(n^k).\]
\end{conjecture}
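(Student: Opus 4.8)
The plan is to identify the exponent with an explicit invariant of $H$ and then establish matching bounds. Call $F\subseteq V(H)$ a \emph{frame} if every connected component of $H-F$ has at most two neighbours in $F$, and write $c(H-F)$ for the number of components of $H-F$. I claim the right value of the constant is
\[
k(H)=\max\{\,c(H-F): F\subseteq V(H)\text{ is a frame}\,\}.
\]
The empty set is always a frame, so $k(H)\ge 1$; and if $H$ is $3$-connected, then any frame $F$ with $c(H-F)\ge 2$ would exhibit a component whose (at most two) neighbours in $F$ form a separator of size at most two, which is impossible, so $k(H)=1$ in that case, consistent with the $O(n)$ bound of Wormald and Eppstein. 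The goal is then to prove $\ex_{\p}(n,H,\emptyset)=\Theta(n^{k(H)})$.

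For the lower bound, fix a frame $F$ attaining the maximum, with components $D_1,\dots,D_k$ of $H-F$, where $D_i$ is joined to a set $A_i\subseteq F$ with $|A_i|\le 2$. I would build $G$ from one copy of the bounded subgraph $H[F]$ together with, for each $i$, roughly $n/k$ internally disjoint copies of $D_i$, each attached to $A_i$ exactly as $D_i$ is attached in $H$. This $G$ is planar: a copy attached along a single vertex is glued at a cut vertex, while a copy attached along a pair can be inserted into a face incident to both vertices of the pair, and each insertion creates a new such face; the condition $|A_i|\le 2$ is precisely what rules out a $K_{3,t}$-subdivision. Selecting one of the $\Theta(n)$ representatives of each $D_i$ independently then yields $\Omega(n^{k})=\Omega(n^{k(H)})$ copies of $H$.

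For the upper bound I would induct on $v(H)$ using the connectivity structure of $H$. If $H$ is $3$-connected, the theorem of Wormald and Eppstein gives $O(n)=O(n^{k(H)})$. If $H$ is disconnected, the count is at most the product of the counts of its components, and the invariant is additive. If $H$ has a cut vertex $v$, write $H=H_1\cup_v H_2$ and bound the number of copies by $\sum_{u\in V(G)}a_1(u)a_2(u)$, where $a_j(u)$ counts the copies of $H_j$ mapping $v$ to $u$; the crucial point is that planarity makes each $a_j(u)$ much smaller than a naive degree bound suggests, as illustrated by $\sum_{y\sim u}\deg(y)\le 2e(G)=O(n)$ for every $u$. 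Finally, if $H$ is $2$-connected but not $3$-connected, fix a $2$-cut $\{a,b\}$, split $H$ into $H_1,H_2$ sharing only $\{a,b\}$, and bound the count by $\sum_{\{u,w\}}b_1(u,w)b_2(u,w)$, where $b_j(u,w)$ counts copies of $H_j$ sending $\{a,b\}$ to $\{u,w\}$. One iterates this along the block and Tutte (SPQR) decomposition of $H$, carrying a rooted refinement of $k(\cdot)$ at each separator.

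The hard part will be the upper bound, and within it the $2$-cut step. A naive union bound over separators systematically overestimates the exponent, as already happens for $P_4$ glued from two copies of $P_2$; the true count is forced down only by exploiting planarity quantitatively, through Euler's formula and the absence of $K_{3,3}$- and $K_5$-subdivisions. Concretely, one must prove bounds of the form $\sum_{\{u,w\}}\mathrm{codeg}(u,w)^2=O(n^2)$ and their higher analogues for arbitrary rooted pieces, and then show that the rooted exponents accumulated through the whole decomposition add up to exactly $\max_F c(H-F)$ for \emph{every} planar host $G$, rather than to something larger. It is this uniform matching --- proving that the parallel-bundle construction of the lower bound is optimal among all planar graphs --- that I expect to be the principal obstacle, and the reason the statement is at present only a conjecture.
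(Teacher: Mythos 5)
You are attempting to prove Conjecture~\ref{Con:integer_empty}, but note that the paper offers no proof of it: it is stated as an open conjecture, verified only for trees (Theorem~\ref{treeorder}, where the exponent is $\beta(T)$) together with the general upper bound $O(n^{\alpha(H)})$ of Theorem~\ref{planargen}. Your proposal is likewise not a proof, as you yourself concede for the upper bound; so the comparison here is between your program and the paper's partial results. Your invariant is sensible --- restricted to trees it does reduce to $\beta(T)$ (in a tree each component of $T-F$ with at most two neighbours in $F$ contains a vertex of degree at most $2$ in $T$, and distinct components are non-adjacent), it satisfies $k(H)\le\alpha(H)$ consistently with Theorem~\ref{planargen}, and it is essentially the ``flap number'' through which Eppstein's question was eventually resolved by Huynh, Joret and Wood. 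But the one step you present as complete, the lower bound, contains a genuine error.

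The claim that a copy of $D_i$ attached along a pair $A_i=\{a,b\}$ ``can be inserted into a face incident to both vertices of the pair'' is false in general, and $|A_i|\le 2$ does \emph{not} by itself rule out the Kuratowski obstructions. Take $H$ to be the octahedron $K_{2,2,2}$ with a pendant vertex $w$ attached at a vertex $a$; let $b$ be the vertex antipodal to $a$. Then $F=\{a,b\}$ is a frame with two components ($w$ and the equatorial $C_4$), so $k(H)=2$. Yet your construction must place two disjoint copies of $C_4$, each completely joined to both $a$ and $b$; the split components of that graph at the $2$-cut $\{a,b\}$ are each $K_{2,2,2}$ plus the edge $ab$, which has $13>3\cdot 6-6$ edges and is therefore nonplanar. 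Equivalently, the octahedron is a $3$-connected triangulation in which the nonadjacent vertices $a$ and $b$ lie on no common face, so already the \emph{first} parallel copy cannot be inserted --- your ``each insertion creates a new such face'' invariant never gets started. The face-insertion argument is only valid when $H[D_i\cup A_i]$ plus a virtual edge on $A_i$ is planar (automatic when $a$ and $b$ are connected in $H$ avoiding $D_i$, since the virtual edge is then a minor of $H$, but not otherwise). Interestingly, $\ex_{\p}(n,H,\emptyset)=\Omega(n^2)$ still holds for this $H$, via a quite different ``flower'' construction: $\Theta(n)$ octahedra pairwise intersecting in a single common vertex of degree $\Theta(n)$, with the pendant vertex chosen among that vertex's other neighbours. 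So your exponent may well be correct, but the parallel-bundle construction does not witness it, and any proof of the lower bound must either refine the definition of a frame or use such non-local constructions. Combined with the admitted absence of the $2$-cut upper-bound lemma (the paper's Lemma~\ref{Constant-Lemma} handles only the tree-like, three-disjoint-paths situation), the proposal remains a program with a concrete gap on both sides, not a proof.
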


We verify this conjecture in the case of trees. To state our result we require some notation.  

\begin{definition}\label{Definition_Beta_i}
For a graph  $H$ and an integer $i$, $i\ge 1$, let $\beta_i(H)$ be the maximum number of components in an  induced subgraph of $H$ containing only two types of components:~(1)~isolated vertices which have degree one  in $H$ and (2)~paths of length $i-1$ consisting only of vertices of degree two from $H$.   
\end{definition}
In particular in the case when $i=1$, we are interested in the maximum size of an independent set in the graph consisting of vertices of degree at most 2.  For simplicity, we let $\beta(G) := \beta_1(G)$ for any graph $G$.  Then we have the following result for trees.

\begin{theorem}\label{treeorder}
Let $T$ be a tree, then
\[
\ex_{\mathcal{P}}(n,T,\emptyset)=\Theta(n^{\beta(T)}).
\]
\end{theorem}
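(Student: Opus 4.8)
The plan is to prove matching lower and upper bounds, the lower bound by an explicit blow-up construction and the upper bound by an injective-embedding count controlled by planar sparsity. Throughout, fix a maximum independent set $I$ among the vertices of $T$ of degree at most $2$, so that $\abs{I}=\beta(T)$ and, since $I$ is independent, every neighbour in $T$ of a vertex of $I$ lies outside $I$.

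For the lower bound I would blow up $I$ by $t:=\floor{(n-v(T))/\beta(T)}$ and, if necessary, add a few isolated vertices to reach exactly $n$ vertices; call the result $G$. Planarity is preserved because $I$ is an independent set of vertices of degree at most $2$: one can perform the blow-up one vertex at a time, and replacing a single degree-$\le 2$ vertex by $t$ parallel leaves (or $t$ parallel length-two connectors between its two neighbours) keeps the graph planar, while independence of $I$ ensures the degrees of the remaining vertices of $I$ are unchanged, so each successive blow-up is again a local operation at a degree-$\le 2$ vertex. To count copies, keep $V(T)\setminus I$ fixed and select, for each $s\in I$, one of its $t$ copies; since $I$ is independent these choices are mutually compatible and each selection yields a distinct subgraph isomorphic to $T$, giving at least $t^{\beta(T)}=\Omega(n^{\beta(T)})$ copies.

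For the upper bound I would count injective homomorphisms $\phi\colon T\to G$ (copies of $T$, up to the constant factor $\abs{\mathrm{Aut}(T)}$). Set $W:=V(T)\setminus I$; then $W$ is a vertex cover of $T$, it contains every vertex of degree at least $3$, $\abs{W}=v(T)-\beta(T)=O(1)$, and every $v\in I$ has all of its (at most two) neighbours in $W$. Exposing the images of $W$ first and the images of $I$ afterwards yields
\[
N(T,G)\ \le\ \sum_{\psi}\ \prod_{v\in I} a_v(\psi),
\]
where $N(T,G)$ is the number of copies of $T$ in $G$, the map $\psi$ ranges over injective homomorphisms of the forest $T[W]$ into $G$, and $a_v(\psi)$ bounds the number of available images of $v$: at most $\deg_G(\psi(u))$ when $v$ has a single neighbour $u\in W$, and at most the codegree $\abs{N_G(\psi(a))\cap N_G(\psi(b))}$ when $v$ has two neighbours $a,b\in W$. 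I would then bound this sum by $O(n^{\beta(T)})$ using planar sparsity, namely $e(G)\le 3v(G)-6$ (so that $\sum_v \deg_G(v)=O(n)$ and $\max_v \deg_G(v)\le n$) together with the fact that the cherry count $\sum_v\binom{\deg_G(v)}{2}$, and more generally the relevant codegree sums, are $O(n^2)$, arranging that each of the $\beta(T)$ vertices of $I$ contributes exactly one factor of $n$.

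The main obstacle is that injectivity cannot be discarded in this estimate. Dropping it and bounding by the homomorphism count $\hom(T,G)$ fails, since $\hom(T,G)$ can have strictly larger order than $n^{\beta(T)}$: for instance the spider with three legs of length two has \emph{no} injective copy in the star $K_{1,n-1}$, yet admits $\Theta(n^4)$ homomorphisms into it, whereas its value of $\beta$ is $3$. Consequently the degree/codegree sum must be estimated while retaining the constraints that the images of $W$ are distinct and carry the degrees forced by $T$. I expect to carry this out by induction on $v(T)$, peeling a pendant leaf or suppressing a degree-two path at each step and tracking a degree-weighted copy count; the delicate case is a pendant removal that does not decrease $\beta(T)$, where the extra factor $\deg_G(\phi(u))$ must be absorbed through sparsity and the distinctness constraints rather than paid for by a fresh factor of $n$. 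Making this absorption precise for an arbitrary tree structure is the crux of the argument.
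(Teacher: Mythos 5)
Your lower bound is correct and is essentially the paper's construction (blow up a maximum independent set of degree-at-most-2 vertices), so no issue there. The problem is the upper bound, where you yourself concede the crux ("making this absorption precise \dots is the crux of the argument"): the proposal never actually establishes $\N(T,G)=O(n^{\beta(T)})$, and the toolkit you propose to close it with cannot work. You plan to control the sum $\sum_{\psi}\prod_{v\in I}a_v(\psi)$ using only quantitative sparsity statistics of planar graphs --- $e(G)\le 3n-6$, $\sum_v d(v)=O(n)$, cherry counts $\sum_v\binom{d(v)}{2}=O(n^2)$, and codegree sums $O(n^2)$. But these statistics do not characterize planarity, and the bound $O(n^{\beta(T)})$ genuinely fails for graphs satisfying all of them. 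Concretely, take $T$ to be the spider with three legs of length $2$, so $\beta(T)=3$, and take $G=K_{3,n-3}$: it is $3$-degenerate with $e(G)=3n-9$, its cherry count and codegree sums are $\Theta(n^2)$, yet choosing the centre on the large side (forcing the three small-side vertices as middles) and then three distinct large-side leaves yields $\Theta(n^4)=\Theta(n^{\beta(T)+1})$ copies of $T$. Since your intended inference would apply verbatim to this graph, no amount of careful bookkeeping with degrees, codegrees and injectivity constraints can rescue the argument; some genuinely topological consequence of a planar embedding must enter. (This is consistent with the paper's \cref{planargen}: degeneracy alone only gives the exponent $\alpha(T)$, which for this spider is $4$.)

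The missing idea is exactly the paper's Lemma~\ref{Constant-Lemma}: in a planar graph, for three fixed vertices $v,u,w$ and fixed lengths $n_1,n_2,n_3$, only a \emph{constant} number of vertices $x$ admit three internally disjoint paths of those lengths to $v,u,w$; the proof is topological (a maximal family of internally disjoint $v$--$u$ paths cuts the plane into regions, and the fixed vertex $w$ can reach only boundedly many of them). With this in hand, the paper also inverts your order of exposure: instead of fixing the images of $W=V(T)\setminus I$ first and paying degree factors for $I$ (which creates your absorption problem), it removes the set $A$ of degree-$\ge 3$ vertices of $T$, embeds the resulting path components $T_1,\dots,T_k$ first --- the path case needs no new work, since $\alpha(P_k)=\beta(P_k)$ makes \cref{planargen} tight for paths --- and then uses the lemma to conclude that each vertex of $A$ has only $O(1)$ possible images once the paths are placed, together with the additivity $\beta(T)=\sum_i\beta(T_i)$. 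If you want to repair your write-up, you would need to prove a statement of the strength of Lemma~\ref{Constant-Lemma} (or import it); as submitted, the upper bound is a plan with a counterexample-blocked main step rather than a proof.
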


For any graph $H$, let $\alpha(H)$ be the independence number of $H$. In the general case we have the following upper bound. 

\begin{theorem}
\label{planargen}
Let $H$ be any graph, then
\[
\ex_{\mathcal{P}}(n,H,\emptyset)=O(n^{\alpha(H)}).
\]
\end{theorem}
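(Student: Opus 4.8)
The plan is to exploit the sparsity of planar graphs in the strong form of bounded degeneracy. Since every subgraph of a planar graph has a vertex of degree at most $5$, a planar graph $G$ is $5$-degenerate, and hence admits an acyclic orientation $\vec{G}$ in which every vertex has out-degree at most $5$: process the vertices in a degeneracy order and orient each edge from the vertex deleted earlier to the vertex deleted later, so that the out-neighbors of a vertex are exactly its neighbors still present when it is removed. The crucial structural remark is that any copy of $H$ sitting inside $G$ inherits from $\vec{G}$ an acyclic orientation of $H$, and that the sources of an acyclic orientation (the vertices of in-degree zero) always form an independent set: if two sources were adjacent, the edge between them would be oriented toward one of them, contradicting that it is a source.

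Concretely, I would fix one acyclic orientation $\vec{H}$ of $H$ and bound the number of edge- and orientation-preserving maps $\phi\colon V(H)\to V(G)$, that is, maps sending each arc $u\to w$ of $\vec{H}$ to an arc $\phi(u)\to\phi(w)$ of $\vec{G}$. Processing the vertices of $H$ in a topological order of $\vec{H}$, each source may be mapped to any of at most $n$ vertices, while each non-source $w$ has an already-placed in-neighbor $u$ and must be sent to one of the at most $5$ out-neighbors of $\phi(u)$. Writing $s(\vec{H})$ for the number of sources, this yields at most $n^{s(\vec{H})}\,5^{\,v(H)-s(\vec{H})}$ such maps. Since the sources form an independent set, $s(\vec{H})\le \alpha(H)$, so this count is $O(n^{\alpha(H)})$ with the implied constant depending only on $H$.

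Finally, every copy of $H$ in $G$ induces exactly one of the at most $2^{e(H)}$ acyclic orientations of $H$ and is counted, up to the constant factor $\lvert\mathrm{Aut}(H)\rvert$, among the orientation-preserving maps for that orientation; summing the previous bound over all acyclic orientations of $H$ then gives $\ex_{\mathcal{P}}(n,H,\emptyset)=O(n^{\alpha(H)})$. I expect the only real content to be the two observations that drive the argument, namely that bounded degeneracy forces bounded out-degree in an acyclic orientation, and that the sources of any acyclic orientation of $H$ are independent and hence number at most $\alpha(H)$; once these are in place the counting is routine. The one point to handle carefully is the bookkeeping: verifying that passing between unlabeled copies, injective homomorphisms, and general orientation-preserving maps costs only constant factors, and that summing over the finitely many orientations leaves the exponent $\alpha(H)$ untouched.
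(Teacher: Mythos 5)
Your proof is correct, but it takes a genuinely different route from the paper's. The paper also reduces to degeneracy (it proves the statement for the class of $c$-degenerate graphs, of which planar graphs with $c=5$ are a special case), but its mechanism is a double induction: take a vertex $v$ of degree at most $c$; every copy of $H$ through $v$ places some $u_i\in V(H)$ at $v$, embeds the neighbors of $u_i$ among the at most $c$ neighbors of $v$, and leaves behind a copy of $H_i=H-N[u_i]$, whose independence number is at most $\alpha(H)-1$ (the paper's Proposition~\ref{simpleprop}); hence $v$ lies in $O(n^{\alpha(H)-1})$ copies, and deleting $v$ yields the telescoping increment bound $f_c(n+1,H)-f_c(n,H)=O(n^{\alpha(H)-1})$ of Lemma~\ref{dif}. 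You instead convert degeneracy into an acyclic orientation of out-degree at most $5$ and count orientation-preserving maps in a single pass along a topological order, with the key observation that the sources of an acyclic orientation form an independent set; this lemma plays exactly the role that $\alpha(H-N[u])\le\alpha(H)-1$ plays in the paper. Your version is non-inductive, produces the explicit constant $2^{e(H)}5^{v(H)}$, and bounds homomorphism counts (not just subgraph counts) for free; the paper's recursion yields the finer incremental statement about $f_c(n+1,H)-f_c(n,H)$, which is of independent interest. Both arguments generalize verbatim to $c$-degenerate classes. One tiny imprecision in your bookkeeping: the $\lvert\mathrm{Aut}(H)\rvert$ injective maps arising from a single copy may pull back the inherited orientation to \emph{several different} acyclic orientations of $H$, not all to the same one; but since you only need that each copy contributes at least one orientation-preserving injective map for at least one of the at most $2^{e(H)}$ acyclic orientations, the upper bound stands unaffected.
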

This theorem will follow as an immediate consequence of results we prove about degenerate graphs in Section~\ref{degenerate}. As a corollary of Theorem~\ref{planargen} we obtain the order of magnitude of the maximum number of cycles (note that this result was already obtained by Hakimi and Schmeichel in~\cite{hakimi}).

\begin{corollary}\label{ckbound}
For all $k \ge 3$, we have
\[
\ex_{\p}(n,C_k,\emptyset) = \Theta(n^{\floor{k/2}}).
\]
\end{corollary}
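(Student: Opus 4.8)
The upper bound is immediate. Since the independence number of the $k$-cycle is $\alpha(C_k)=\floor{k/2}$, Theorem~\ref{planargen} yields $\ex_{\p}(n,C_k,\emptyset)=O(n^{\floor{k/2}})$. It therefore remains only to construct, for each fixed $k\ge 3$, an $n$-vertex planar graph containing $\Omega(n^{\floor{k/2}})$ copies of $C_k$.

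For the lower bound I would use a \emph{bundled cycle}. Set $m=\floor{k/2}$ and suppose first that $m\ge 2$. Take hub vertices $v_1,\dots,v_m$ placed cyclically, and between each consecutive pair $v_i,v_{i+1}$ (indices modulo $m$) insert a bundle of $t$ internally disjoint paths, the lengths being chosen so that selecting one path from every bundle closes up to a cycle of length exactly $k$. Concretely, if $k=2m$ let every bundle consist of $t$ paths of length $2$ (that is, $t$ common neighbours of $v_i$ and $v_{i+1}$), and if $k=2m+1$ let one distinguished bundle consist of $t$ paths of length $3$ while the other $m-1$ bundles consist of $t$ paths of length $2$; in both cases one path per bundle traces a closed walk of length $2(m-1)+2=2m$ or $2(m-1)+3=2m+1$, as required. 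Distinct selections use distinct internal vertices and hence give distinct cycles, so there are at least $t^m$ copies of $C_k$. As the graph has $n=m+\Theta(mt)$ vertices we have $t=\Theta(n)$, giving $\Omega(n^m)=\Omega(n^{\floor{k/2}})$ copies. The single remaining case $m=1$, i.e.\ $k=3$, is handled by any near-triangulation; for instance a fan, a vertex joined to every vertex of a path $p_1\cdots p_t$, already contains $t-1=\Theta(n)$ triangles.

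The one point that requires care is planarity of the bundled cycle. Drawing the hub cycle $v_1\cdots v_m$ as a convex polygon and routing every bundle in the exterior, each edge $v_iv_{i+1}$ can be given its own region of the outer face in which its $t$ internally disjoint paths are nested without crossings; since different bundles use disjoint internal vertices and disjoint regions, the whole graph embeds in the plane. (When $m=2$ the hub cycle degenerates to two vertices joined by two bundles, a theta-type graph that is patently planar; alternatively one may simply take $K_{2,n-2}$, which already has $\binom{n-2}{2}=\Theta(n^2)$ copies of $C_4$.) The remaining steps — verifying that the selected closed walks are genuine $k$-cycles on distinct vertices, that different bundle selections yield different cycles, and that the vertex count satisfies $n=\Theta(mt)$ — are routine bookkeeping, so the crux of the argument is really just exhibiting this construction and confirming its planarity.
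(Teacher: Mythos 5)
Your proposal is correct and follows essentially the same route as the paper: the upper bound is quoted from Theorem~\ref{planargen} via $\alpha(C_k)=\floor{k/2}$, and your ``bundled cycle'' is exactly the paper's lower-bound construction (blowing up a maximum independent set of degree-two vertices in $C_k$ by $\Theta(n/k)$), described in slightly different language, with your length-$3$ bundle in the odd case a cosmetic variant of the paper's single surviving hub edge. The extra care you take with planarity and the small cases $m=1,2$ is fine but not a departure from the paper's argument.
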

The lower bound is attained by taking a cycle $C_k$ and blowing up a maximum sized independent set by $\floor{2n/k}-1$.  Note that the constant in the asymptotic notation may depend on $k$, and this construction contains asymptotically $\left(\frac{2n}{k}\right)^{\floor{k/2}}$ copies of $C_k$.

Next we consider the case when the set of forbidden graphs is nonempty.  In this case, we pose a conjecture which generalizes Conjecture~\ref{Con:integer_empty}.  

\begin{conjecture}
For all finite sets of graphs $\mathcal{F}$ and for all graphs $H$, we have
\[
\ex_{\p}(n,H,\mathcal{F}) = \Theta(n^k),
\]
for some integer $k$.
\end{conjecture}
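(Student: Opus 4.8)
The plan is to attack the conjecture by separating it into a \emph{universal} upper bound that is already an integer power of $n$, and a matching lower bound coming from explicit constructions; the content of the conjecture is precisely that these two meet. First I would record the easy half of the upper bound: every $\mathcal{F}$-free planar graph is in particular planar, so by the degeneracy machinery behind Theorem~\ref{planargen} we always have $\ex_{\p}(n,H,\mathcal{F}) \le \ex_{\p}(n,H,\emptyset) = O(n^{\alpha(H)})$. Thus the quantity is trapped below a fixed integer power of $n$ for free, and the real work is to identify the \emph{exact} exponent and to show it is attained up to a constant, with no intermediate or logarithmic behaviour.

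Next I would try to predict the exponent by generalizing the parameter $\beta$ of Definition~\ref{Definition_Beta_i}. For $\mathcal{F}=\emptyset$ and $H=T$ a tree, Theorem~\ref{treeorder} tells us the exponent is $\beta(T)$, the largest number of pairwise non-adjacent degree-$\le 2$ vertices that one may blow up; Corollary~\ref{ckbound} is the analogous statement for cycles, where the exponent $\lfloor k/2\rfloor=\alpha(C_k)$ is realized by blowing up a maximum independent set. I would therefore define, for a general $H$ and $\mathcal{F}$, a combinatorial parameter $k=k(H,\mathcal{F})$ equal to the maximum number of vertices of $H$ that can be simultaneously blown up while (i) keeping the host planar and (ii) avoiding every $F\in\mathcal{F}$, and conjecture that this $k$ is the exponent. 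The lower bound $\ex_{\p}(n,H,\mathcal{F})=\Omega(n^{k})$ should then follow from the associated blow-up construction exactly as in Theorem~\ref{treeorder} and Corollary~\ref{ckbound}; the only new subtlety is that the blow-up must be checked to remain $\mathcal{F}$-free.

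For the matching upper bound I would run the degeneracy-based embedding count that proves Theorem~\ref{planargen}: order $V(G)$ by a degeneracy ordering, observe that each copy of $H$ induces an acyclic orientation of $H$, count copies orientation by orientation, and charge a factor of $n$ only to the source vertices. The crude bound charges one factor of $n$ per vertex of a maximum independent set, giving $\alpha(H)$; the refinement to the true exponent must come from a careful summation that exploits planar sparsity (the bound $\sum_v d(v)=O(n)$ together with $K_5$- and $K_{3,3}$-freeness, which control how many copies of small dense subgraphs can occur), so that only a restricted set of vertices of $H$ genuinely contributes a factor of $n$. This is the same phenomenon that sharpens $\alpha(T)$ to $\beta(T)$ in the tree case, and here the forbidden family $\mathcal{F}$ should enter as an additional set of constraints cutting down the achievable local structures, hence the number of ``expensive'' vertices, from $\alpha(H)$ down to $k(H,\mathcal{F})$.

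The hard part will be this last upper bound in full generality. For $\mathcal{F}=\emptyset$ one only needs sparsity of planar graphs, but once $\mathcal{F}$ is arbitrary the count becomes entangled with genuine planar Tur\'an problems: pinning the exponent down exactly requires understanding how many copies of the forbidden graphs a near-extremal configuration can avoid, which already contains delicate extremal questions such as the $C_4$-free cases treated elsewhere in the paper. Showing that the resulting exponent is always an \emph{integer}, with no fractional powers and no $\log$ factors, is the crux; this is exactly the behaviour that can fail for general host graphs and that one expects to hold here only because planarity forces bounded degeneracy and a rigid local structure. I would therefore anticipate that a complete proof needs, as an intermediate step, a structural description of the near-extremal $\mathcal{F}$-free planar graphs, and it is presumably the absence of such a description for arbitrary $\mathcal{F}$ that forces the statement to be posed as a conjecture rather than proved as a theorem.
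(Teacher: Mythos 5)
You should note first that the statement you were given is one of the paper's \emph{conjectures}: the paper offers no proof of it, and your text is, by its own admission, a research programme rather than a proof. Your opening reduction (monotonicity plus Theorem~\ref{planargen} gives $\ex_{\p}(n,H,\mathcal{F})=O(n^{\alpha(H)})$) is correct but only caps the exponent; it says nothing about the conjecture's actual content, which is that the growth rate is $\Theta(n^k)$ for an \emph{integer} $k$, with no intermediate orders. Your final paragraph concedes that the matching upper bound for general $\mathcal{F}$ is open, so there is no proof here to assess against the paper --- correctly so, since none exists.

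Beyond that, one concrete step of your plan would fail: your proposed exponent $k(H,\mathcal{F})$, defined as the maximum number of vertices of $H$ that can be simultaneously blown up while staying planar and $\mathcal{F}$-free, is already refuted by the cases the paper settles. Take $H=C_k$ and $\mathcal{F}=\{C_4\}$: blowing up even a single degree-$2$ vertex $v$ with neighbors $u,w$ creates the $4$-cycle $u v_1 w v_2$, so your parameter is $0$ and predicts $\Theta(1)$, whereas Theorem~\ref{c4cycle} gives $\ex_{\p}(n,C_k,\{C_4\})=\Theta(n^{\lfloor k/3\rfloor})$. The correct lower-bound constructions (see the discussion after Theorem~\ref{eventree} and the proof of Theorem~\ref{c4cycle}) do not blow up vertices but replace entire \emph{paths} of $H$ by $\Omega(n)$ internally disjoint paths of the same length; this is exactly what the parameter $\beta_\ell$ of Definition~\ref{Definition_Beta_i} encodes, and any candidate exponent for general $(H,\mathcal{F})$ must at least account for such subdivision-type replacements. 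Relatedly, your upper-bound sketch leans only on degeneracy and sparsity, but even in the solved cases the sharpening from $\alpha$ to $\beta$ (resp.\ $\beta_\ell$) required genuinely different tools: the topological counting argument of Lemma~\ref{Constant-Lemma} for branch vertices, and the Lam--Verstra\"ete bound (Theorem~\ref{thanksTao}) on the number of short paths between a fixed pair of vertices when short even cycles are forbidden. No analogue of these is available for arbitrary $\mathcal{F}$, which is precisely why the statement remains a conjecture.
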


We consider a variation of Theorem~\ref{treeorder} for the case when $C_4,C_6,\dots,C_{2\ell}$ are forbidden. 
We prove the following.

% \begin{theorem}\label{treeorder_C_4_free}
% Let $T$ be a tree, then
% \[
% \ex_{\mathcal{P}}(n,T,\{C_4\})=\Theta(n^{\beta_2(T)}).
% \]
% \end{theorem}

% Moreover, we conjecture that the following more general result is true.

% \begin{conjecture}
% \label{treeconj}
% For any tree $T$, we have 
% \[
% \ex_{\p}(n,T,\{C_4,C_6,\dots,C_{2\ell}\})= \Theta(n^{\beta_\ell(T)}).
% \]
% \end{conjecture}

\begin{theorem}
\label{eventree}
For any tree $T$, we have 
\[
\ex_{\p}(n,T,\{C_4,C_6,\dots,C_{2\ell}\})= \Theta(n^{\beta_\ell(T)}).
\]
\end{theorem}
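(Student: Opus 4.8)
The plan is to prove matching upper and lower bounds for $\ex_{\p}(n,T,\{C_4,C_6,\dots,C_{2\ell}\})=\Theta(n^{\beta_\ell(T)})$, mirroring the structure of \Cref{treeorder} but exploiting the absence of short even cycles to control the host graph's structure. The exponent $\beta_\ell(T)$ is governed by \Cref{Definition_Beta_i}: we count the maximum number of components in an induced subgraph of $T$ using isolated degree-one vertices and paths of length $\ell-1$ through degree-two vertices. The intuition is that in a host graph with no even cycle up to length $2\ell$, the number of ways to embed the "flexible" parts of $T$ (the leaves and the degree-two paths of length $\ell-1$) scales like a constant power of $n$ for each such component, while the rigid skeleton of $T$ can only be embedded in $O(n)$ ways, giving $n$ a bounded number of times.

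For the \textbf{lower bound}, I would first construct an explicit $n$-vertex planar graph free of $C_4,\dots,C_{2\ell}$ that contains $\Omega(n^{\beta_\ell(T)})$ copies of $T$. Fix an induced subgraph of $T$ realizing $\beta_\ell(T)$, consisting of $a$ isolated degree-one vertices and $b$ paths of length $\ell-1$, so $\beta_\ell(T)=a+b$. The construction starts from a fixed planar drawing of the "core" of $T$ (the tree with the flexible pieces removed), then attaches many pendant copies of each flexible piece. A leaf attaches to its neighbor by a single edge (contributing a linear factor of choices), and a degree-two path of length $\ell-1$ attaches via a path of that length; crucially, I must route these attachments so that no two of them, together with the core, create a short even cycle. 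Because each path has length $\ell-1$ and any two distinct attachment paths sharing both endpoints would create a cycle of length $2(\ell-1)+2=2\ell$ only if they share endpoints, I would attach each flexible path at a distinct pair of core vertices (or as a pendant path hanging off a single vertex), ensuring all created cycles have length exceeding $2\ell$ or none at all. This yields roughly $\Theta(n)$ independent choices per flexible component and hence $\Omega(n^{a+b})=\Omega(n^{\beta_\ell(T)})$ embeddings of $T$; planarity and the girth condition must be verified for the final drawing.

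For the \textbf{upper bound}, I would count embeddings of $T$ into an arbitrary $n$-vertex planar graph $G$ with no $C_4,\dots,C_{2\ell}$. The key structural fact is that such a graph, having girth (on even cycles) at least $2\ell+2$ in the relevant sense, has bounded "local density": the number of vertices within distance $\ell$ of any fixed vertex that can be reached by short paths is controlled, and in particular two vertices are joined by at most one path of length $\le \ell$ (otherwise we would close up a short even cycle). I would root the embedding by processing an edge of $T$ at a time, or better, by peeling off the flexible pieces: first bound the number of embeddings of the core skeleton by $O(n)$ using that the core is a fixed tree embedded in a planar graph, and then argue that each degree-two path of length $\ell-1$ in $T$ can be extended in only $O(n)$ ways \emph{given} its endpoint, because the $C_4,\dots,C_{2\ell}$-freeness forces near-uniqueness of short paths and a linear bound on the number of short paths leaving a vertex. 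Summing over the $\beta_\ell(T)$ flexible components multiplies to $O(n^{\beta_\ell(T)})$; I would formalize the "each flexible extension costs $O(n)$" claim via a clean counting lemma about the number of paths of a fixed length $\le \ell$ in a $C_4,\dots,C_{2\ell}$-free planar graph.

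The hard part will be the upper bound, specifically pinning down the right counting lemma that converts the forbidden-even-cycle condition into a sharp per-component factor of $O(n)$ rather than a weaker $O(n^{\ell})$ bound; the subtlety is that a path of length $\ell-1$ through degree-two vertices could a priori be embedded along $\Theta(n^{\ell-1})$ paths, and the whole point of the even-cycle exclusion is to collapse this to linear because short paths between fixed endpoints become essentially unique. I expect to need the planarity crucially here (via an edge bound of the form $e(G)=O(n)$, which follows from the girth condition and Euler's formula) to avoid overcounting, and the most delicate bookkeeping will be ensuring that the flexible components are counted independently without double-counting shared skeleton vertices — this is exactly where the induced-subgraph requirement in \Cref{Definition_Beta_i} does the work.
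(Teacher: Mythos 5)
Your lower bound is essentially the paper's construction (replicate each witness component of Definition~\ref{Definition_Beta_i} $\Theta(n)$ times, attached to the same neighbours), though your cycle arithmetic is off: two parallel copies of a flexible path of length $\ell-1$, together with the attachment edge at each end, close a cycle of length $2(\ell+1)=2\ell+2>2\ell$, not $2(\ell-1)+2=2\ell$; this is precisely why the construction is legal, and no re-routing to ``distinct pairs of core vertices'' is needed or even possible, since all replicas of a single component must attach to the same pair of neighbours to produce many copies of $T$.

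The upper bound has a genuine gap, in fact two. First, your claim that the core (the tree minus the witness components) embeds in only $O(n)$ ways is false: take $T$ to be a spider with three legs of length $2$ and $\ell=2$; the witness is the three leaves, the core is a $K_{1,3}$, and the star $K_{1,n-1}$ is a $C_4$-free planar graph containing $\Theta(n^3)$ copies of $K_{1,3}$. (Even granting the claim, your own accounting --- core $O(n)$, times $O(n)$ for each of the $\beta_\ell(T)$ flexible pieces --- multiplies to $O(n^{\beta_\ell(T)+1})$, one factor too many.) The correct order of counting is the reverse of yours: count the flexible parts \emph{first}, then show everything else is pinned up to constants. Concretely, the paper builds a path forest $F$ induced by the leaves, the degree-$2$ vertices at distance at least $\ell$ from branch vertices, and carefully chosen midpoints of medium-length chains, proves $\beta_\ell(F)=\beta_\ell(T)$, and bounds copies of $F$ by $O(n^{\beta_\ell(F)})$ by anchoring one edge per $\ell+1$ steps along each path ($O(n)$ choices each, since $e(G)=O(n)$) and invoking Lam--Verstraete (Theorem~\ref{thanksTao}) for the length-at-most-$\ell$ segments in between. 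Second, your ``short paths are nearly unique'' principle cannot pin the branch vertices: a vertex of degree at least $3$ in $T$ is joined to the already-fixed leaves by paths that may be much longer than $\ell$, so the even-girth condition gives nothing there. This is exactly where planarity enters beyond Euler-formula edge counting, via Lemma~\ref{Constant-Lemma}: a vertex admitting three internally disjoint paths of prescribed lengths to three fixed vertices has only a bounded number of possible locations in a planar graph. Your proposal contains no substitute for this lemma, and without it --- and without the midpoint bookkeeping that guarantees every remaining degree-$2$ vertex lies on a path of length at most $\ell$ between two already-embedded vertices, so that Theorem~\ref{thanksTao} applies --- the count does not close.
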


The lower bound in Theorem~\ref{eventree} is given as follows.  Take an induced subgraph of $T$ consisting of $\beta_\ell(T)$ components as described in Definition~\ref{Definition_Beta_i}. Replace each path (including the ones of length 0) by $\Omega(n)$ paths of the same length with endpoints joined to the same neighbors as the corresponding paths in $T$ and number of vertices summing to $n$.  The resulting graph has $\Omega(n^{\beta_\ell(T)})$ copies of $T$ and contains no cycle in the set $\{C_4,C_6,\dots,C_{2\ell}\}$. In fact, we believe that a construction of this form should yield the correct asymptotic value of $\ex_{\p}(n,T,\{C_4,C_6,\dots,C_{2\ell}\})$, but our proof yields the order of magnitude.  We also have the following exact result for maximizing the number of $C_5$ copies in a $C_4$-free planar graph.

\begin{theorem}\label{thm:C_4_C_5}
 For all $n\geq 4$, $n\neq 6$, we have \[\ex_{\p}(n,C_5,\{C_4\})=n-4.\]  
\end{theorem}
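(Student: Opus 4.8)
The plan is to prove matching bounds: for the lower bound exhibit an explicit $C_4$-free planar graph on $n$ vertices with exactly $n-4$ copies of $C_5$, and for the upper bound argue by induction on $n$, so the bulk of the work is the inductive step. First I would reduce the upper bound to $2$-connected graphs: a $5$-cycle lies entirely in one block, $\sum_{\text{blocks }B}(v(B)-1)=v(G)-\#\text{components}$, and two blocks meet in at most a cut vertex, so once the per-block estimate $c_5(B)\le v(B)-4$ is known, summing yields $c_5(G)\le n-4$ with room to spare whenever more than one block carries a $5$-cycle. Since a degree-$1$ vertex lies in no cycle, I may also assume $\delta(G)\ge 2$. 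The base cases $n=4$ (no copies) and $n=5$ (one copy) are immediate, and the excluded value $n=6$ is handled separately: any two distinct $5$-cycles span at least $7$ vertices, since if they shared $4$ vertices the two ``closing'' vertices would be common neighbours of the endpoints of the shared path and create a $C_4$; hence on $6$ vertices $c_5\le 1<n-4$. This deficit at $n=6$ is precisely the slack the induction will exploit.

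The engine of the count is that in a $C_4$-free graph every pair of vertices has at most one common neighbour. Two consequences drive the argument: every path $axyc$ of length $3$ closes to at most one $5$-cycle (a closing vertex must be a common neighbour of $a$ and $c$), so counting the five length-$3$ subpaths of each pentagon gives $5\,c_5(G)=\#\{\text{closing paths of length }3\}$; and two distinct $5$-cycles share at most a path of length $2$. For the inductive step I would take a vertex $v$ of small degree and examine the $5$-cycles through it. If $\deg(v)=2$ with neighbours $p,q$, then the pentagons through $v$ are exactly the length-$3$ paths from $p$ to $q$; as $p,q$ have no second common neighbour these paths are internally disjoint and form a theta-bundle. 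If \emph{some} such $v$ lies in at most one pentagon, deleting it gives $c_5(G)\le c_5(G-v)+1\le (n-1-4)+1=n-4$, completing the step.

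For the construction, the extremal graph at $n=7$ is the subdivision of $K_4$ obtained by subdividing once the three edges at a single vertex: a triangle $t_1t_2t_3$ together with an apex $O$ joined to each $t_i$ by a path of length $2$. One checks it is planar and $C_4$-free (its only triangle is $t_1t_2t_3$ and no pair has two common neighbours), and its three $5$-cycles $O\,s_i\,t_i\,t_j\,s_j$ are in bijection with the three triangle edges, giving $3=n-4$. I would generalize this ``apex over a central gadget'' idea so that $5$-cycles correspond to edges of an auxiliary gadget while the subdivided arms enforce $C_4$-freeness, then verify planarity, $C_4$-freeness, and that the pentagon count is exactly $n-4$.

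The hard part is the inductive step in the \emph{rigid} case where \emph{every} low-degree vertex lies in two or more $5$-cycles; the graph above is exactly of this type, since each of its degree-$2$ vertices lies in two pentagons. There deleting a vertex can drop the pentagon count by more than one, so the naive induction fails, and one must either absorb the loss into the $n=6$ slack (as for that graph, where $c_5=3=1+2$ and the residual $6$-vertex graph attains only the maximum $1$) or classify these rigid theta-bundle and apex configurations and bound them directly. Establishing that a removable vertex always exists unless the graph is one of finitely many controlled configurations is where I expect the real difficulty, together with the bookkeeping needed to make the construction meet $n-4$ exactly rather than merely asymptotically.
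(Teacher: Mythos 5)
Your proposal has a genuine gap, and you name it yourself: the entire inductive engine rests on finding a vertex whose deletion loses at most one pentagon, and in the ``rigid'' case where every low-degree vertex lies in two or more pentagons you offer only the hope of absorbing losses into the $n=6$ slack or classifying ``finitely many controlled configurations.'' Neither is carried out, and neither is routine --- the extremal graphs themselves (e.g.\ your subdivided $K_4$ at $n=7$, and more generally a pentagon with many internally disjoint paths between two fixed vertices) are exactly of this rigid type, so the hard case is not exceptional but is where the whole content of the theorem lives. A secondary issue: your lower bound is only exhibited at $n=7$; the claim for all $n\ge 4$, $n\ne 6$ needs an explicit family realizing $n-4$ for every such $n$ (the paper parametrizes $n=5+3t+2s$ and builds one), and your ``apex over a central gadget'' idea is left unverified. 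There is also a small hole in your $n=6$ argument: two distinct pentagons on $6$ vertices sharing $4$ vertices need not share those vertices as the \emph{same} path of length $3$ in both cycles, so the ``two closing vertices are common neighbours of the shared path's endpoints'' step does not immediately apply; the conclusion is true but needs a more careful case check.

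It is worth seeing how the paper avoids your hard case entirely: it never deletes vertices. Instead it fixes a planar embedding and splits on whether some pentagon fails to be a face. If every pentagon is a face, it deletes one edge from each triangular face (no two triangular faces share an edge in a $C_4$-free graph), leaving a plane graph with all faces of length at least $5$, and Euler's formula gives $f\le \tfrac{2}{3}n-\tfrac{4}{3}\le n-4$ for $n\ge 7$. If some pentagon $P$ is non-facial, it separates the plane; since $G$ is $C_4$-free no pentagon crosses $P$, so every pentagon lies in the inner graph $G_1$ or outer graph $G_2$ (both containing $P$, with $n_1+n_2=n+5$), and induction on both pieces gives at most $(n_1-4)+(n_2-4)-1=n-4$. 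This embedding-based dichotomy converts the global count into a facial count plus a separator recursion, which is precisely the mechanism your vertex-deletion scheme lacks; if you want to salvage your approach you would need a structural classification of the rigid configurations, which is likely harder than the theorem itself.
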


Moreover, we determine the order of magnitude of $\ex_{\p}(n,C_k,\{C_4\})$ for every $k$.  We obtain the following result.

\begin{theorem}\label{c4cycle}
For all $k \ge 5$, we have
\[
\ex_{\p}(n,C_k,\{C_4\}) = \Theta(n^{\floor{k/3}}).
\]
\end{theorem}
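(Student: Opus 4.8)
The plan is to match a construction with an encoding (reconstruction) argument, the exponent $\floor{k/3}$ arising because in a $C_4$-free graph a detour of length at most $2$ between two fixed vertices is essentially unique, while only detours of length at least $3$ can appear with unbounded multiplicity.

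\emph{Lower bound.} Starting from a single $C_k$, I would choose $m=\floor{k/3}$ pairwise disjoint arcs of length $3$ (three consecutive edges each); since $3m\le k$ this is possible, and the remaining arcs have total length $k-3m\le 2$. Replace each chosen arc by $\Theta(n)$ internally disjoint paths of length $3$ sharing the same two endpoints, and leave the short remaining arcs unchanged. The resulting graph is planar (draw the parallel paths of each bundle as nested arcs in one face) and has girth at least $5$, hence is $C_4$-free; selecting one path from each of the $m$ bundles produces a distinct copy of $C_k$, for a total of $\Theta(n^{\floor{k/3}})$ copies on $\Theta(n)$ vertices. For $m=1$ (the case $k=5$) this is a single bundle of length-$3$ paths closed off by one fixed path of length $2$, recovering the $\Theta(n)$ of \cref{thm:C_4_C_5}.

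\emph{Upper bound, setup.} Let $G$ be $n$-vertex planar and $C_4$-free. Copies of $C_k$ lie inside $2$-connected blocks and avoid vertices of degree at most $1$, so I may assume $\delta(G)\ge 2$; Dowden's bound $e(G)\le \frac{15}{7}(n-2)$ then gives $O(n)$ edges and $O(n)$ \emph{branch} vertices (degree $\ge 3$), the degree-$2$ vertices forming \emph{threads} (maximal paths of degree-$2$ vertices) between branch vertices. The guiding dichotomy is that a thread of length at most $2$ is \emph{pinned} by its endpoints—a length-$1$ thread is an edge and a length-$2$ thread is a common neighbour, unique since $G$ is $C_4$-free—whereas only threads of length at least $3$ can occur with large multiplicity between a fixed pair of branch vertices. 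These long threads are exactly the bundles of the construction, and each consumes at least $3$ of the total length budget $k$.

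\emph{Upper bound, charging.} I would encode each copy of $C_k$ by a set of ``paid'' objects, allotting $O(n)$ choices to each and deriving everything else for free, then show at most $\floor{k/3}$ objects are ever paid. Each long thread is paid once and fixes both of its endpoints, accounting for at least three units of length. A branch vertex whose two incident cycle-threads are both short and whose two cycle-neighbours are already known need not be paid: if both incident threads have length $1$ it is the unique common neighbour of two known vertices, and a branch vertex can only have incident length exactly $2$ in this way. Hence an \emph{unpaid}-able vertex is forced precisely when it is already determined, while any branch vertex we are obliged to pay for has incident cycle-length at least $3$. The aim is then to organise the reconstruction so that, as for the bundles, every factor of $n$ is charged at least three units of the length budget $k$ without double-counting shared threads, giving $O(n^{\floor{k/3}})$.

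\emph{Main obstacle.} The hard part is making this charging valid in the ``branch-heavy'' regime, where the cycle uses many short threads and few or no bundles. One must produce an elimination order of the cycle's vertices in which enough of them become pinnable as unique common neighbours, so that the paid vertices are genuinely spread at least three apart along the cycle. This is exactly where $C_4$-freeness alone fails and planarity is indispensable: the Erd\H{o}s--R\'enyi polarity graph is $C_4$-free yet has $\Theta(n^{5/2})$ copies of $C_5$, so a correct argument must invoke a planar feature—$5$-degeneracy, the sparsity $e(G)<\frac{30}{7}n$, or the fact that in a $C_4$-free graph each edge lies in at most one triangle—to guarantee the required supply of pinnable vertices. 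Concretely, I expect to interleave a degeneracy order of $G$ (so that a branch vertex with a single known, $\prec$-larger neighbour costs only $O(1)$ back-neighbours) with the common-neighbour pinning, and the decisive lemma—that these together leave at most $\floor{k/3}$ free vertices on every $C_k$—is the step I anticipate will require the most care.
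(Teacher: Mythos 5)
Your lower bound is correct and is, in substance, the paper's construction: the paper's wording (``replace each edge in this matching with $\frac{n-k}{2\floor{k/3}}$ edges'') is garbled, but the vertex count of $2$ internal vertices per path per bundle shows it means exactly your $\floor{k/3}$ bundles of internally disjoint length-$3$ paths. The upper bound, however, is only a plan, and the step you yourself flag as the main obstacle is a genuine gap, not a deferred technicality. In particular, your proposed combination of a degeneracy order with common-neighbour pinning cannot close the case $k\equiv 2\pmod 3$. To see this concretely, write $k=3m+2$ and count what any vertex-anchored encoding can achieve: if the anchor is a single bounded-degree vertex $v_1$, the known part of the cycle consists of the block $v_{3m+2},v_1,v_2$ (the neighbours cost $O(1)$) together with $E$ ``paid'' edges at $O(n)$ each, and between consecutive known blocks at most one vertex can be pinned as a unique common neighbour (two adjacent unknown vertices cannot both be pinned). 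With $E+1$ gaps this forces $3m+2\le 3+2E+(E+1)$, i.e.\ $E\ge m$, so you get $O(n^{m})$ copies per anchor and $O(n^{m+1})$ in total --- one factor of $n$ too many, no matter how the eliminations are interleaved. Your thread/branch-vertex decomposition of $G$ does not help here either: the paper's argument needs no such global structure, and the encoding must work per copy precisely in the ``branch-heavy'' regime where your charging stalls.

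The decisive ingredient you are missing is the theorem of Borodin, Kostochka, Sheikh and Yu~\cite{mdeg7} (after He et al.~\cite{mdegree}) that every $C_4$-free planar graph with minimum degree at least $2$ contains an edge $\{u,v\}$ with $d(u)+d(v)\le 7$. Anchoring at this \emph{light edge} as $\{v_1,v_2\}$ enlarges the free known block to four consecutive vertices ($v_{3m+2},v_1,v_2,v_3$ all cost $O(1)$), and the same count now gives $3m+2\le 4+2E+(E+1)$, i.e.\ $E\ge m-1$, which is attainable: pay $O(n)$ for the edges $\{v_5,v_6\},\{v_8,v_9\},\dots$, and recover each intermediate vertex $v_4,v_7,\dots,v_{3m+1}$ as the \emph{unique} common neighbour of two known vertices, using $C_4$-freeness. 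This yields $O(n^{m-1})$ copies through the light edge; iterating --- remove the light edge, delete the degree-$\le 1$ vertices created, and repeat over at most $O(n)$ steps --- gives $O(n^{m})=O(n^{\floor{k/3}})$, as in the paper. Your instinct that a bounded-degree vertex plus common-neighbour pinning suffices is correct for $k\equiv 0,1\pmod 3$ (and the paper indeed handles those cases with just a degree-$\le 5$ vertex), but for $k\equiv 2\pmod 3$ the light-edge lemma is exactly the planar feature that supplies the missing factor, and without it your proposal does not constitute a proof.
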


We conjecture that in fact a much more general result holds.  

\begin{conjecture} \label{ckgenconjecture}
For sufficiently large $k$, we have
\[
\ex_{\p}(n,C_k,\{C_4,C_6,\dots,C_{2\ell}\}) = \Theta(n^{\floor{\frac{k}{\ell+1}}}).
\]
\end{conjecture}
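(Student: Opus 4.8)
The plan is to prove matching lower and upper bounds, where the lower bound follows the blow-up philosophy already used for Corollary~\ref{ckbound} and the construction after Theorem~\ref{eventree}, and the upper bound carries the real difficulty. Throughout, $G$ denotes an $n$-vertex planar graph containing no $C_4,\dots,C_{2\ell}$, and I set $m=\floor{k/(\ell+1)}$.

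\textbf{Lower bound.} I would generalize the bundle construction as follows. Take $m$ ``branch'' vertices $a_1,\dots,a_m$ arranged cyclically, and between consecutive branch vertices $a_i$ and $a_{i+1}$ insert a bundle of $\Theta(n/(m\ell))$ internally disjoint paths, each of length $\ell+1$; distribute the remainder $k-m(\ell+1)$ by lengthening the paths in a few bundles so that selecting exactly one path from each bundle produces a cycle of length precisely $k$. This uses $\Theta(n)$ vertices in total, and choosing one path from each of the $m$ bundles yields $\Omega(n^{m})=\Omega\!\left(n^{\floor{k/(\ell+1)}}\right)$ copies of $C_k$. The graph is planar, since each bundle can be drawn as a family of nested arcs on the outer side of the corresponding edge of the skeleton cycle; and because two parallel paths within one bundle already form the shortest even cycle, of length $2(\ell+1)=2\ell+2>2\ell$, no forbidden cycle appears. (Here I use that $k$ is large enough that $m\ge 3$.)

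\textbf{Upper bound.} First I would record that planarity gives $e(G)=O(n)$, and then establish a structural lemma on short paths: two internally disjoint $u$--$v$ paths of lengths $s,t\le \ell$ create a cycle of length $s+t\le 2\ell$, which is forbidden whenever $s+t$ is even. Hence paths of length at most $\ell$ between a fixed pair are severely restricted, whereas paths of length exactly $\ell+1$ are not --- this is precisely why the threshold $\ell+1$ governs the exponent. The core should then be a reduction to a \emph{skeleton}: suppress the degree-$2$ vertices of the $2$-core of $G$, replacing each maximal path of internal degree-$2$ vertices by a single edge recording its length, so that a copy of $C_k$ becomes a closed walk in a reduced multigraph whose edges occur in bundles of multiplicities $b_1,b_2,\dots$. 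Since each bundle path consumes at least $\ell$ internal vertices, $\sum_i b_i=O(n)$, while the number of $C_k$'s threading a fixed cyclic sequence of $m$ bundles is the product of the corresponding $b_i$. A convexity (AM--GM) argument then caps any such product by $\left(O(n)/m\right)^{m}=O\!\left(n^{\floor{k/(\ell+1)}}\right)$, provided the number of admissible bundle-sequences is $O(1)$.

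\textbf{Main obstacle.} The hard part, and the reason only the case $\ell=2$ is settled in Theorem~\ref{c4cycle}, is making the skeleton reduction both rigorous and exhaustive. One must show that essentially every copy of $C_k$ decomposes into a chain of $\floor{k/(\ell+1)}$ long paths glued at branch vertices, ruling out cycles that wind through the branch vertices in complicated ways, reuse portions of several bundles, or contain only a few genuine branch vertices. Controlling these irregular copies is where planarity must be invoked a second time, both to bound the number of vertices of degree $\ge 3$ in the skeleton and to bound the number of admissible length-$m$ bundle-sequences. Unlike the case $\ell=2$, where the only suppressed paths are single edges and cherries, for general $\ell$ the suppressed paths may have any length up to $\ell$, so several of them can overlap and interleave, greatly complicating the case analysis. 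I expect that disentangling this interleaving, together with a clean planarity-based bound on the number of bundle-sequences, is the crux on which a full proof would turn.
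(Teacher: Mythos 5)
You should first be clear about the status of the statement: it is Conjecture~\ref{ckgenconjecture}, and the paper does not prove it. The paper supplies only (i) a lower-bound sketch --- noting $\beta_{\ell}(C_k)=\floor{\frac{k}{\ell+1}}$ and blowing up the $\floor{\frac{k}{\ell+1}}$ disjoint paths of $C_k$ into $\Omega(n)$ parallel paths each, exactly as in the construction after Theorem~\ref{eventree} --- and (ii) a remark that the proof of Theorem~\ref{c4cycle} can be adapted to settle the residues $k\equiv 0,1,2 \pmod{\ell+1}$. Your lower bound is essentially the paper's construction in different clothing, and your verification of it is sound: the bundles of internally disjoint paths of length $\ell+1$ (a few lengthened to absorb the remainder) are planar via nested arcs, the shortest even cycle they create has length $2\ell+2$, and one path per bundle gives $\Omega\left(n^{\floor{k/(\ell+1)}}\right)$ copies of $C_k$. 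So the question is whether your upper bound closes the conjecture, and it does not.

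The gap is the one you yourself flag, and flagging it does not fill it: your skeleton reduction presumes that every copy of $C_k$ decomposes into roughly $m=\floor{\frac{k}{\ell+1}}$ suppressed degree-$2$ paths glued at branch vertices. A planar graph with no short even cycle can have $\Theta(n)$ vertices of degree at least $3$, so copies of $C_k$ that wind mostly through the branch skeleton are not controlled by your bundle bookkeeping at all, and no argument in your proposal rules them out or bounds them. (A minor side remark: your proviso that the number of admissible bundle-sequences be $O(1)$ is unnecessary --- the multinomial inequality $\sum_{i_1<\dots<i_m} b_{i_1}\cdots b_{i_m} \le \left(\sum_i b_i\right)^{m}/m! = O(n^m)$ handles all sequences simultaneously; the entire difficulty sits in the irregular copies, not in the sequence count.) For comparison, the partial attack the paper indicates is structurally different from your skeleton: fix $m$ edges spaced $\ell+1$ apart along the putative cycle, each with $O(n)$ choices since planarity gives $e(G)=O(n)$, and fill each gap --- a path of length at most $\ell$ between two already-fixed vertices --- in $O(1)$ ways by the Lam--Verstraete bound (Theorem~\ref{thanksTao}); the remainder $r=k-m(\ell+1)$ is absorbed by seeding with a vertex of degree at most $5$ (for $r=1$) or an edge of degree-sum at most $7$ (for $r=2$; this is available because the graph is in particular $C_4$-free), followed by iterative deletion as in Theorem~\ref{c4cycle}. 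For $3\le r\le \ell$ this stalls, since one would need a path of several consecutive bounded-degree vertices, which no known lemma supplies --- and that is precisely why the statement remains a conjecture and why your proposal, as you concede, is a research program rather than a proof.
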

A construction for a lower bound in Conjecture~\ref{ckgenconjecture} is similar to that of Theorem~\ref{eventree}.  Namely, we note that $\beta_{\ell}(C_k) = \floor{\frac{k}{\ell+1}}$, and replace each of the $\beta_{\ell}(C_k)$ paths with $\Omega(n)$ paths of the same length joined to the corresponding pair of vertices.  The proof of Theorem~\ref{c4cycle} can be adapted to resolve Conjecure~\ref{ckgenconjecture} in the cases when $k$ is congruent to 0, 1 or 2 modulo $\ell+1$.  

We conclude this section by contrasting our results in the planar case with the known results in the general case. It was shown in~\cite{gen1} and~\cite{gen2} that $\ex(n,C_k,C_4) = \Theta(n^{k/2})$.  This result is in stark contrast to our results in the planar case in two ways. First, in the planar case the order of magnitude is always an integer power of $n$, and second in the planar case we have $k/3$ rather than $k/2$ in the exponent.

       \section{General Upper Bounds for Degenerate Graph Classes}
       \label{degenerate}

     Alon and Shikhelman~\cite{ALS2016} proved that for any bipartite graph $H$ and tree $T$ we have {$\ex(n,H,T) = O(n^{\alpha(H)})$}, where $\alpha(H)$ is the independence number of $H$.  
     This result was extended to all graphs~$H$ in~\cite{plpk}. 
     Since the extremal number of a tree $T$ is linear in $n$, it follows that any $T$-free graph has a vertex of degree at most $c_T$, a constant depending on $T$.  
     Call a graph $G$  $c$-degenerate if every subgraph of $G$ contains a vertex of degree at most $c$.
     The proof from~\cite{plpk} can easily be extended to work for the class of  $c$-degenerate graphs.
     We now present a proof of this theorem for completeness.  
     
    First we introduce some notation.  For given graphs $G$ and $H$, let $\N(H,G)$ denote the number of copies of $H$ in $G$. Let $\mG_c$ denote the class of $c$-degenerate graphs, and let \[f_c(n,H):= \max\{\N(H,G): G \in \mG_c, v(G)=n\}.\]

%I'm not sure if it is best to include this or have the proof included in the Lemma -OZ
\begin{proposition}
\label{clique}
 $f_c(n,K_r) = O(n)$, where the constant depends only on $r$ and $c$.
\end{proposition}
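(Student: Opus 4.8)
The plan is to exploit a degeneracy ordering of the vertices. Since $G$ is $c$-degenerate, every subgraph contains a vertex of degree at most $c$; repeatedly deleting such a vertex and reversing the order of deletion yields an ordering $v_1, v_2, \dots, v_n$ of $V(G)$ in which each $v_i$ has at most $c$ neighbors among $v_1, \dots, v_{i-1}$. I will call these the \emph{back-neighbors} of $v_i$, writing $B(v_i)$ for the set of them, so that $\abs{B(v_i)} \le c$ for every $i$.

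First I would assign to each copy of $K_r$ its \emph{last} vertex in this ordering, that is, the one of largest index among the $r$ vertices of the clique. This assignment is well defined and partitions the copies of $K_r$ according to their last vertex. Next, I fix a vertex $v_i$ and bound the number of copies of $K_r$ whose last vertex is $v_i$. Any such clique consists of $v_i$ together with $r-1$ further vertices, each adjacent to $v_i$ and of smaller index; hence each of these lies in $B(v_i)$. The number of such cliques is therefore at most $\binom{\abs{B(v_i)}}{r-1} \le \binom{c}{r-1}$.

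Finally, summing over the $n$ possible choices of the last vertex gives $\N(K_r,G) \le n\binom{c}{r-1} = O(n)$, with the implied constant depending only on $r$ and $c$, as claimed. The argument is a direct counting, so I do not expect a serious obstacle; the only step requiring care is the existence of the degeneracy ordering, which however follows at once from the definition of $c$-degeneracy (peel off a vertex of degree at most $c$, which remains valid since every subgraph inherits the degeneracy property). The key conceptual point is simply that the last vertex of a clique forces its remaining $r-1$ vertices into a set of size at most $c$, which is what makes the count linear rather than polynomial in $n$.
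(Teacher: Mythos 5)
Your proof is correct, and it takes a genuinely different route from the paper's. The paper proceeds by induction on $r$: it uses the double-counting identity $r\,\N(K_r,G) = \sum_{v \in V(G)} \N(K_{r-1},G[N(v)])$, bounds each summand by $C_{r-1}\,d(v)$ via the induction hypothesis, and then uses the fact that a $c$-degenerate graph has $e(G) = O(n)$ edges. You instead work with a degeneracy ordering, assign each clique to its last vertex, and observe that the remaining $r-1$ vertices must lie in the back-neighborhood $B(v_i)$ of size at most $c$, giving the bound $\N(K_r,G) \le n\binom{c}{r-1}$ directly. Your derivation of the ordering is sound (the vertex deleted at each stage has at most $c$ neighbors among the vertices deleted after it, precisely because every subgraph inherits a vertex of degree at most $c$), and the last-vertex assignment is well defined. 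Your approach buys an explicit and rather good constant $\binom{c}{r-1}$ in a single non-inductive step, and it even shows as a byproduct that $c$-degenerate graphs contain no $K_r$ at all once $r > c+1$; the paper's approach is slightly less sharp on constants but its double-counting template $\sum_v \N(H',G[N(v)])$ is the same engine that drives the more general Lemma~\ref{dif} later in the section, which is presumably why the authors chose it. Both arguments are complete; yours is arguably the more elementary of the two.
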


\begin{proof} We proceed by induction on $r$.  For $r=1$ the result is clear, so assume $r>1$ and that $f_c(n,K_{r-1}) \le C_{r-1}n$ for a constant $C_{r-1}$. Let $G$ be an $n$-vertex graph in $\mG_c,$ then  we have that \begin{displaymath}
r\N(K_r,G) = \sum_{v \in V(G)} \N(K_{r-1},G[N(v)]) \le \sum_{v \in V(G)} C_{r-1} d(v) = O(e(G)) = O(n). \qedhere \end{displaymath} 
\end{proof}

Theorem~\ref{planargen} follows as a simple consequence of the following lemma which will be proven by induction on $\alpha(H)$. 

\begin{lemma}
\label{dif}
For any graph $H$
\begin{displaymath}f_c(n+1,H) - f_c(n,H) = O(n^{\alpha(H)-1}).\end{displaymath} Here, the constant given by the $O$ notation depends only on $H$ and $c$.
\end{lemma}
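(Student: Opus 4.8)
The plan is to induct on $\alpha(H)$. The key structural fact I will exploit is that deleting a vertex together with its entire neighborhood strictly decreases the independence number: if $R = V(H) \setminus N_H[w]$ for some $w \in V(H)$, then every independent set of $H[R]$ can be extended by $w$ (which is non-adjacent to all of $R$), so $\alpha(H[R]) \le \alpha(H) - 1$. I will also use the observation that the lemma \emph{implies}, via the telescoping identity $f_c(n,H') = f_c(1,H') + \sum_{m=1}^{n-1}\bigl(f_c(m+1,H') - f_c(m,H')\bigr)$, the global bound $f_c(n,H') = O(n^{\alpha(H')})$ whenever the lemma is known for $H'$. Hence the inductive hypothesis — the lemma for all graphs of independence number strictly less than $\alpha(H)$ — supplies $f_c(n,H') = O(n^{\alpha(H')})$ for every such $H'$.

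For the base case $\alpha(H)=1$, the graph $H$ is complete, so $R = \emptyset$ for every choice of $w$ and the counting argument below yields a constant bound (alternatively one may quote Proposition~\ref{clique}). For the inductive step, let $G$ be an extremal $(n+1)$-vertex graph in $\mG_c$, so $\N(H,G) = f_c(n+1,H)$. Since $G$ is $c$-degenerate it has a vertex $v$ with $d(v) \le c$, and $G - v$ lies in $\mG_c$ with $n$ vertices, so $\N(H, G-v) \le f_c(n,H)$. Consequently
\[
f_c(n+1,H) - f_c(n,H) \le \N(H,G) - \N(H,G-v),
\]
which is exactly the number of copies of $H$ in $G$ that use the vertex $v$. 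It therefore remains to show that this quantity is $O(n^{\alpha(H)-1})$.

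To bound the copies through $v$, I will sum over the vertex $w \in V(H)$ whose role $v$ plays in a given copy. In any such copy the image of $N_H(w)$ must lie inside $N_G(v)$, a set of size at most $c$, giving at most $c^{d_H(w)}$ choices for the images of the neighbors of $w$ — a constant depending only on $H$ and $c$. Having fixed $v$ and the images of $N_H(w)$, the remaining vertices $R = V(H)\setminus N_H[w]$ must be embedded as a copy of $H[R]$ in $G$, and dropping the adjacency constraints to the already-placed vertices only increases the count, so the number of completions is at most $\N(H[R],G) \le f_c(n+1, H[R])$. Since $\alpha(H[R]) \le \alpha(H) - 1$, the inductive hypothesis gives $f_c(n+1, H[R]) = O\bigl((n+1)^{\alpha(H)-1}\bigr) = O(n^{\alpha(H)-1})$. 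Summing over the at most $v(H)$ choices of $w$ preserves the bound $O(n^{\alpha(H)-1})$, completing the induction.

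The main obstacle, and the crux of the argument, is isolating the right quantity to induct on: passing from the global count $f_c(n,H)$ to the \emph{discrete derivative} $f_c(n+1,H)-f_c(n,H)$, and recognizing that a single low-degree vertex contributes only $O(n^{\alpha(H)-1})$ copies precisely because fixing it forces one ``independent coordinate'' of $H$ to collapse. Once the inequality $\alpha(H[R]) \le \alpha(H)-1$ is in hand, $c$-degeneracy bounds the local branching at $v$ by a constant, and the telescoping sum converts the derivative bound back into the global $O(n^{\alpha(H)})$ estimate; the rest is bookkeeping over the roles $w$ and their constant factors.
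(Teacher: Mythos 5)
Your proof is correct and follows essentially the same route as the paper's: induction on $\alpha(H)$, taking a minimum-degree vertex $v$ (with $d(v)\le c$) in an extremal $c$-degenerate graph, summing over the role $w\in V(H)$ that $v$ plays, bounding the images of $N_H(w)$ inside $N_G(v)$ by $c^{d_H(w)}$, and using the fact that deleting $N_H[w]$ drops the independence number (the paper's Proposition~\ref{simpleprop}). The only difference is cosmetic: you spell out the telescoping step that converts the inductive hypothesis into the global bound $f_c(n,H')=O(n^{\alpha(H')})$, which the paper uses implicitly when it writes $\N(H_i,G)=O(n^{\alpha(H_i)})$.
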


We start by proving the following well-known fact.

\begin{proposition}
\label{simpleprop}
Let $H$ be a graph, and let $u$ be a vertex of $H$. If $H'$ is the graph obtained from $H$ by removing $u$ together with its neighborhood, then $\alpha(H') \leq \alpha(H)-1.$ 
\end{proposition}

\begin{proof}
If $X$ is a maximal independent set in $H'$, then since no neighbor of $u$ is in $X$, the set $X\cup\{u\}$ is independent in $H$ and so $\alpha(H') + 1 \leq \alpha(H).$
\end{proof}

We are now ready to prove Lemma \ref{dif}.

\begin{proof}[Proof of Lemma \ref{dif}]
% For the base case of the induction, note that if $\alpha(H) = 1$, then $H$ is a clique and it is  easy to see that $\ex(n,K_s,T) = O(n)$ for any $s$ and $T$. (We may, for example, use the simple bound of $\ex(n,T) \le v(T) n$, for any tree $T$, and apply an induction argument similar to the proof of Theorem \ref{luo}.) \ref{clique}

 To estimate $f_c(n+1,H) - f_c(n,H)$, we will start with a graph $G \in \mG_c$ on $n+1$ vertices with the maximum number of copies of $H$.
 We know that $\delta(G)  \le c$. 
 Let $v$ be a vertex of minimum degree in $G$. We will estimate the number of copies of $H$ in $G$ containing $v$ as a vertex.

Let $V(H) = \{u_1,u_2,\dots,u_{v(H)}\}$, and let $H_i$ be the graph obtained by removing $u_i$  together with its neighbors. 
By Proposition \ref{simpleprop}, we know that $\alpha(H_i) \leq \alpha(H)-1$.  
Now for each copy of $H$ using $v$ as a vertex, $v$ must play the role of some $u_i$, and the neighbors of $u_i$ must be embedded in the neighborhood of $v$.  
It follows that the other vertices of $H$, that is the vertices of $H_i$, must be embedded in some way in the remaining vertices of $G$.
We have to choose $d_H(u_i)$ vertices in $N(v)$, so the number of copies of $H$ using $v$ is at most 
\begin{displaymath}
\sum_{i=1}^{v(H)} d(v)^{d_H(u_i)}\N(H_i,G) \leq \sum_{i=1}^{v(H)} c^{d_H(u_i)}\N(H_i,G) = \sum_{i=1}^{v(H)}O_{{H_i}}(n^{\alpha(H_i)}) = O(n^{\alpha(H)-1}).  \end{displaymath}

Thus, if $G'$ is the graph obtained from $G$ by removing $v$, we have that 
\[f_c(n+1,H) = \N(H,G) = \N(H,G') + O(n^{\alpha(H)-1}) \leq f_c(n,H) + O(n^{\alpha(H)-1}).\qedhere\] 
\end{proof}

%Now Theorem \ref{general} is a simple consequence of the recursion given by the lemma. 

% For some particular graphs $H$, by studying more carefully the number of copies of $H$ that use some fixed vertex, we can find a better recursion than the one from Lemma \ref{dif}. In the following section, we improve the recursion for several specific classes of graphs.  For these graphs we will find an integer valued function $f(n)$ which is a lower bound of the extremal number $\ex(n,H,T)$, such that $f(n)$ grows faster than $\ex(n,H,T)$ (when they do not agree). Since both functions are integer valued they must coincide eventually.

\section{The Number of Trees in Planar Graphs} 
    In this section we prove Theorem~\ref{treeorder}.  
First we provide the lower bound, $\ex_{\mathcal{P}}(n,T,\emptyset)=\Omega(n^{\beta(T)})$. 
    Observe that, if a given graph is planar and one blows up a  set of independent vertices each of  degree at most two, then resulting graph is also planar.  
    Therefore the following construction provides the desired lower bound. Given a tree $T$, fix an independent set $S$ of size $\beta(T)$ which contains vertices of degree at most two (as in Definition~\ref{Definition_Beta_i}) and blow up this set by  $\floor{\frac{n}{2\beta(T)}}$.  
    The resulting graph is  planar with at most $n$ vertices, when $n$ is sufficiently large, and contains $\Omega(n^{\beta(T)})$ copies of the tree $T$.

    %Hence we have, Theorem \ref{treeorder} holds for paths,     \[\ex_{\mathcal{P}}(n,T,\emptyset)=\Omega(n^{\beta(T)}).\]

Observe that  we have $\ex_{\mathcal{P}}(n,P_k,\emptyset)= O(n^{\alpha(P_k)})$ from Theorem \ref{planargen}, where $P_k$ denotes path of length $k$. Even more we have $\alpha(P_k)=\beta(P_k)$ from Definition \ref{Definition_Beta_i}. Therefore 
we have the following simple proposition.

\begin{proposition}\label{path}
    $\ex_{\mathcal{P}}(n,P_k,\emptyset)=\Theta(n^{\beta(P_k)})$.
\end{proposition}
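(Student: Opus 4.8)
The plan is to obtain the matching upper and lower bounds separately, each from a result already established in the excerpt, with the only genuine content being the identity $\alpha(P_k) = \beta(P_k)$.

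For the upper bound I would invoke Theorem~\ref{planargen}, which gives $\ex_{\mathcal{P}}(n, P_k, \emptyset) = O(n^{\alpha(P_k)})$ for any graph, hence in particular for $P_k$. To convert this into $O(n^{\beta(P_k)})$ I would verify that $\alpha(P_k) = \beta(P_k)$. This is where the structure of a path is used: $P_k$ has $k+1$ vertices, its two endpoints have degree $1$ and every internal vertex has degree $2$, so \emph{every} vertex of $P_k$ has degree at most $2$. Since $\beta = \beta_1$ (by the discussion following Definition~\ref{Definition_Beta_i}) is exactly the maximum size of an independent set among the vertices of degree at most $2$, and since that restriction is vacuous here, $\beta(P_k)$ coincides with the ordinary independence number $\alpha(P_k)$.

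For the lower bound I would simply note that $P_k$ is a tree and appeal to the lower bound already proved for Theorem~\ref{treeorder}: blowing up a maximum independent set $S$ of degree-$\le 2$ vertices of $P_k$ (of size $\beta(P_k)$) by a factor of order $n$ keeps the graph planar and produces $\Omega(n^{\beta(P_k)})$ copies of $P_k$. Combining the two bounds yields $\ex_{\mathcal{P}}(n,P_k,\emptyset) = \Theta(n^{\beta(P_k)})$.

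I expect no substantive obstacle: the proposition is essentially a corollary of the two theorems. The one point requiring care is the identity $\alpha(P_k)=\beta(P_k)$, and the reason it holds for paths (but not for trees in general) is precisely that paths have maximum degree $2$, so the degree restriction in Definition~\ref{Definition_Beta_i} imposes nothing. For a general tree the upper bound $O(n^{\alpha})$ coming from Theorem~\ref{planargen} can be strictly weaker than the truth $\Theta(n^{\beta})$, which is exactly why Theorem~\ref{treeorder} needs a more delicate argument; for paths the easy degenerate bound already matches the construction.
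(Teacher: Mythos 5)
Your proposal is correct and matches the paper's own argument exactly: the paper also derives the upper bound from Theorem~\ref{planargen} together with the observation that $\alpha(P_k)=\beta(P_k)$ (immediate from Definition~\ref{Definition_Beta_i} since every vertex of a path has degree at most $2$), and the lower bound from the blow-up construction already given for general trees. Your closing remark about why this shortcut fails for general trees is a correct reading of the paper's structure as well.
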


Hence Theorem \ref{treeorder} holds for paths.
To show that Theorem \ref{treeorder} holds for any tree, we are going to use the following lemma.

           \begin{lemma}\label{Constant-Lemma} For a given planar graph $G$. Let $v$, $u$ and $w$ be fixed vertices in $G$ and let $n_1$, $n_2$ and $n_3$ be non negative integers.
           The number of vertices $x$, such that, there are three internally disjoint paths from $x$ to $v$, from $x$ to $u$ and from $x$ to $w$ of length $n_1$, $n_2$ and $n_3$, respectively, is bounded by a constant $C:=C(n_1,n_2,n_3).$
      \end{lemma}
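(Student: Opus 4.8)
The plan is to show that if the number of such vertices $x$ exceeds a suitable constant $C(n_1,n_2,n_3)$, then $G$ contains a subdivision of $K_{3,3}$, contradicting planarity via Kuratowski's theorem. Throughout I assume $u,v,w$ are distinct and that each $n_i\ge 1$: if some $n_i=0$ then $x$ is forced to equal the corresponding target and there is at most one such $x$, while coincidences among $u,v,w$ force two of the three paths to share an endpoint and are handled by the same argument applied to the resulting theta-type configurations. For each admissible $x$, fix a choice of the three internally disjoint paths and let $S_x$ denote the resulting \emph{spider}: a subdivision of $K_{1,3}$ with center $x$, leaves $u,v,w$, and legs of lengths $n_1,n_2,n_3$. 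Since $n_i\ge 1$ we have $x\notin\{u,v,w\}$, and $S_x$ has at most $s:=n_1+n_2+n_3+1$ vertices.

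The central observation is that if $x_1,x_2,x_3$ are admissible vertices whose spiders are pairwise internally disjoint (pairwise disjoint outside $\{u,v,w\}$), then $S_{x_1}\cup S_{x_2}\cup S_{x_3}$ is a subdivision of $K_{3,3}$ with branch vertices $x_1,x_2,x_3,u,v,w$. It therefore suffices to prove that once the number of admissible vertices is large enough, three internally disjoint spiders must exist. To extract them I would apply the sunflower lemma of Erd\H{o}s and Rado to the family of sets $\{V(S_x)\setminus\{u,v,w\}\}$, each of size at most $s-3$. Taking the number of admissible vertices above the Erd\H{o}s--Rado threshold yields a sunflower: spiders $S_{x_1},\dots,S_{x_K}$ sharing a common core $Y$ outside $\{u,v,w\}$ and with pairwise disjoint petals. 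Writing $Z:=Y\cup\{u,v,w\}$, which has at most $s$ vertices, I would then pigeonhole on the ``type'' of a spider relative to $Z$, namely which positions of the abstract spider map into $Z$ and to which vertices of $Z$ they map; there are only boundedly many types, so choosing $K$ large retains at least three spiders of one common type.

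For spiders of a common type the center position is necessarily unmarked, since distinct centers cannot all equal a single fixed vertex of $Z$, so each $x_i$ lies in its private petal. On each of the three legs there is a well-defined first vertex $m_j$ mapped into $Z$, and its image $z_j$ is the same fixed vertex of $Z$ for every spider of the type. The truncated legs from $x_i$ to $z_1,z_2,z_3$ use only private vertices, hence are internally disjoint across different $i$; moreover $z_1,z_2,z_3$ are pairwise distinct, because two internally disjoint legs of a single spider cannot meet at an interior vertex, and each $z_j$ is distinct from every $x_i$ as it is marked while the centers are not. Thus any three of these truncated spiders form a subdivision of $K_{3,3}$ with branch vertices $x_1,x_2,x_3,z_1,z_2,z_3$, giving the contradiction and so bounding the number of admissible vertices by the resulting constant $C(n_1,n_2,n_3)$.

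I expect the main obstacle to be exactly the overlap between spiders of different centers, which is why a naive greedy disjointness argument fails: a single vertex near a target may lie on the spiders of unboundedly many centers, so one cannot hope for three fully disjoint spiders without preprocessing. The purpose of the sunflower-plus-type reduction is to reorganize an arbitrary overlap into a common core with disjoint petals, after which truncating each leg at its first core vertex restores internal disjointness automatically. The delicate point is to check that this truncation never degenerates: that the endpoints $z_j$ remain pairwise distinct and the truncated legs remain nontrivial. It is precisely here that the internal disjointness of the three legs of each individual spider is used.
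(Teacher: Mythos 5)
Your argument is correct, but it takes a genuinely different route from the paper's. The paper works inside a fixed planar embedding and inducts on $n_1+n_2$: it takes a maximal family $\mathcal{P}$ of internally disjoint $v$--$u$ paths of length $n_1+n_2$ through admissible midpoints, observes that these paths cut the plane into regions so that $w$, sitting in one region and needing a length-$n_3$ path avoiding $u,v$ to each midpoint, limits $|\mathcal{P}|$ to at most $2n_3+1$, and then routes every remaining admissible $x$ to its first intersection with the (bounded) vertex set of $\mathcal{P}$, producing a configuration with $n_1'+n_2'<n_1+n_2$ to which the induction hypothesis applies. You never look at an embedding: you use only that a planar graph contains no subdivision of $K_{3,3}$, and you manufacture one from three spiders via the Erd\H{o}s--Rado sunflower lemma plus pigeonholing on types, truncating each leg at its first core vertex. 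Your route buys robustness: it proves the lemma verbatim for any class of graphs with no $K_{3,3}$ subdivision (in particular for $K_{3,3}$-minor-free classes), which is in the spirit of Eppstein's question quoted in the paper; the paper's induction is more elementary, avoids sunflower machinery, and its recursion gives comparatively tame explicit constants. Your core steps check out: within one type the center position must be unmarked, the truncated legs have all internal vertices in private petals, the three truncation targets $z_1,z_2,z_3$ are pairwise distinct by internal disjointness of the legs of a single spider, and the centers avoid $Z$, so the nine truncated legs really form a $K_{3,3}$ subdivision on $x_1,x_2,x_3$ versus $z_1,z_2,z_3$.

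Two caveats. First, a routine bookkeeping point: the sunflower lemma should be applied to a family of \emph{distinct} sets, and a single vertex set can arise as $V(S_x)\setminus\{u,v,w\}$ for up to $s$ different centers, so divide the count by $s$ first (equivalently, discard the at most $|Z|\le s$ spiders whose centers land in $Z$ before pigeonholing on types). Second, and more substantively, your parenthetical claim that coincidences among $u,v,w$ are ``handled by the same argument'' is false --- in fact the lemma itself fails there, so no argument can handle them. If $u=v$, glue triangles $u x_i y_i$ at $u$ and join $w$ to every $x_i$: this graph is planar (it is $K_{2,n}$ on $\{u,w\}$ versus the $x_i$ with each $y_i$ placed in an incident face), yet every $x_i$ admits internally disjoint paths to $v=u$, $u$, $w$ of lengths $1,2,1$. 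This flower of cycles through the single vertex $u$ defeats both your extraction (the union of three disjoint spiders is then a planar, doubled $K_{2,3}$-like graph, not a Kuratowski graph) and the paper's region count (from the outer region $w$ can reach every petal without crossing $u$ or $v$). So distinctness of $u,v,w$, which you assumed, is genuinely necessary; the paper's statement is silent on it, but in all of its applications $u,v,w$ are images of distinct leaves under an injective embedding of the tree, so nothing is lost.
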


       \begin{proof}
       Suppose we have a planar embedding of $G$.
       
       The proof will be by induction on $n_1+n_2$. The result is trivial if either $n_1$ or $n_2$ is equal to 0.
      So suppose that $n_1+n_2\geq 2$, and that the result holds for any pair with smaller sum.
 Consider a maximal set $\p$, of internally vertex disjoint paths $v, v^{i}_2,\dots, v^{i}_{n_1},a_i ,u^{i}_2,\dots,u^{i}_{n_2},u$, where each $a_i$ is such that there exist a length $n_3$ path from $a_i$ to $w$ which does not contain any of the vertices $v, v^{i}_2,\dots, v^{i}_{n_1},u^{i}_{n_2},\dots,u^{i}_{2},u$.
       Let us denote the set of $a_i$ in these paths by $A$.  
      Observe that the paths from $\p$ divide the plane into $\abs{A}$ regions $R_1,R_2,\dots,R_{\abs{A}}$.
     Since the vertex $w$ is fixed, it is in one of the regions, and there is  a path of length $n_3$ from $w$ to each  vertex of $A$, not using the vertices $v$ and $u$.  
     Thus $\abs{A} \leq  2n_3+1$. 
     
Now let $Y$ to be the set of $\abs{A}(n_1+n_2-1)+2$ vertices that appear in some path from $\p$, and
let $X$ be the set of those vertices $x$ in $G$ which are not in $Y$ such that there exist three internally disjoint paths from $x$ to $v$, from $x$ to $u$ and from $x$ to $w$ of length $n_1$, $n_2$ and $n_3$, respectively. 
It is sufficient to bound $\abs{X}+\abs{Y}$ by a constant depending on $n_1$, $n_2$ and $n_3$. If $X=\emptyset$ we immediately have the required bound.
Suppose $X$ is nonempty and let $x \in X$, and let $P_1 = v,v_2,\dots,v_{n_1},x$ and $P_2 = x,u_{n_2},\dots,u_2,u$ be two of the three internally disjoint paths from $x$. %OZ: Note that it is important that this are 2 of the 3 and not just any 2, (we are avoiding the choosing of a pair of paths for whihc we may not be able to find the third one)
Let $v'$ and $u'$ be the first vertex (closest to $x$ in $P_i$) in the intersection of $Y$ with $P_1$ and $P_2$, respectively. Note that it is possible for $v'$ to be $v$ or $u'$ to be $u$, but by the definition of $Y$ and $\p$, is not possible for both to happen simultaneously. 
Then the vertex $x$ is such that there exist three internally disjoint paths from $x$ to $v'$, from $x$ to $u'$ and from $x$ to $w$ of length $n_1',n_2'$ and $n_3$ respectively, where $1 \leq n_i' \leq n_i$ for $i=1,2$ and $1 \leq n_3$, with the additional property that $n_1'+n_2' < n_1 + n_2$. 
Therefore, setting $\displaystyle C' = \max_{n_1'+n_2' < n_1+n_2 } C(n_1',n_2',n_3),$ we have that 
\[\displaystyle\abs{X} \leq \binom{\abs{A}(n_1+n_2-1)+2}{2}C' \leq \binom{(2n_3+1)(n_1+n_2-1)+2}{2}C' .\]  Thus, $\abs{X} + \abs{Y}$ is bounded and so the lemma holds.       \end{proof}

Note that Lemma~\ref{Constant-Lemma} implies in particular that if  $G$ is a planar graph and $T$ is a tree with $s\geq 3$ leaves $x_1,x_2,\dots,x_s$. 
Then for any vertex $x\in V(T)$ of degree at least 3 and  $v_1,v_2,\dots,v_s \in V(G)$, the number of vertices $v\in V(G)$, such that, there exists a copy of $T$ where $x$ is embed in $v$ and $x_i$ is embed in $v_i$, for $i=1,\dots,s$, is bounded by a constant that does not depend on $G$. That is since we are able to find three different leaves such that the paths from $x$ to each of these leaves are internally disjoint.

At this point we are ready to prove Theorem \ref{treeorder}.
\begin{proof}[Proof of Theorem~\ref{treeorder}]
%We will prove the theorem by induction on the number of vertices of $T$. 
We may assume $T$ is not a path otherwise we are done, from  Proposition~\ref{path}. 

Let $G$ be an $n$-vertex planar graph.
Let $A$ be the set of vertices of degree at least 3 in $T$, and let $T_1,T_2,\dots,T_k$ be the connected components of the graph induced by $V(T)\setminus A$ (the set~$A$ is non-empty since $T$ is not a path). 
Observe that since $A$ has every vertex of degree at least 3, then  every vertex of $T_i$ has degree at most 2 in both $T_i$ and $T$, so we have 
\[
\beta(T) =\beta\left(\bigcup_{i=1}^k T_i\right) =\sum_{i=1}^k \beta(T_i).
\]
Moreover the trees $T_i$ are paths and so $\N(T_i,G) = O(n^{\beta(T_i)})$, from Proposition~\ref{path}. 
Then for any embedding of the trees $T_i$ by Lemma\ref{Constant-Lemma}, there is a constant number of ways to complete the embedding of $A$ to a copy of $T$. Therefore the number of copies of $T$ is bounded by $O(n^{\sum_{i=1}^k\beta(T_i)})= O(n^{\beta(T)}).$
%Now we are going to show that for every embedding of $T_1,T_2,\dots,T_k$ there is a constant number of ways to  complete the embedding of $T$.
%Fix an embedding of $T_1, T_2,\dots,T_k$ in $G$.  Let $H$ be a component of the forest induced on the set $A$ of vertices of degree at least 3 in $T$. 
%Consider the graph $H'$ induced by the vertices of $A$ together with the neighbors of vertices from $A$ in $V(T)\setminus A$, and note that these neighbors belong to some $T_i$, and they are already embedded in $G$. 
%Observe that each component of $H'$ satisfies the conditions of Lemma~\ref{embedding}. 
%Therefore by Lemma~\ref{embedding}, the embedding of $H$ and every other component of $A$ can be done in a bounded number of ways and so $\N(T,G) = O(n^{\beta(T)})$. 
%Hence we have proved Theorem~\ref{treeorder}.
\end{proof}

    \section[]{The number of $C_5$'s in $C_4$-free Planar Graphs}
    
   In this section we are going to prove Theorem~\ref{thm:C_4_C_5}, namely that for all $n$, $n\geq 4$, $n\neq 6$, we have
   \[\ex_{\p}(n,C_5,\{C_4\})=n-4.\]
    \begin{proof}
    
    We begin by providing the lower bound.  Let $n=5+3t+2s$ for some nonnegative integers $s$ and $t$. (Note that when $n=6$, it is easy to verify that there can be at most one pentagon, thus $\ex_{\p}(6,C_5,\{C_4\})=1$.)  The construction is as follows: Take a pentagon $x_1,x_2,x_3,x_4,x_5$, as well as $t$ internally vertex disjoint paths $y_3^i,y_4^i,y_5^i$, $1 \le i \le t$ between $x_1$ and $x_2$ and add the edges so that $x_4,y_4^1,\dots,y_4^t$ forms a path.  Next take a disjoint path $z_1,z_2,\dots,z_{2s}$.  Add the edge from $z_1$ to $x_1$ and the edges from $z_i$ to $x_5$, for odd $i \equiv 0,1 \, (\bmod \,0 4)$, and $z_i$ to $x_3$, for $i  \equiv 2,3 \, (\bmod \, 4)$.  
    See Figure~\ref{combined} for an example of an extremal graph. 
    
    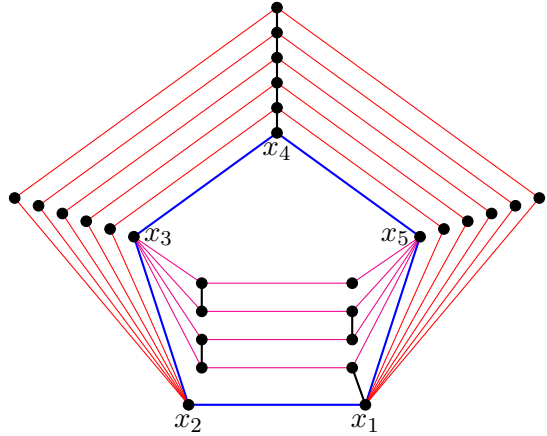
\begin{figure}[h]
    \centering
\begin{tikzpicture}
\pgfmathsetmacro{\a}{5}
\pgfmathsetmacro{\b}{4}
% n = 5 + 2a+3b 
\pgfmathsetmacro{\c}{\b-1}
\draw[thick] (90:2) -- (90:{2+\a/3});
\draw[blue, thick]  (234:2) -- (306:2) -- (18:2)--(90:2) -- (162:2) -- (234:2);
\foreach \x in {1,2,...,\a}{
\ifnum\x > \a{
                            }  
    \else {
\draw[red] (306:2) -- (18:{2+\x/3})--(90:{2+\x/3}) -- (162:{2+\x/3}) -- (234:2);
\filldraw  (18:{2+\x/3}) circle (2pt) (90:{2+\x/3}) circle (2pt) (162:{2+\x/3}) circle (2pt); 
}
\fi
}

\draw[thick] (1,{1.5/\b-1.5}) -- (306:2);

\foreach \x in {1,2,...,\b}{
\ifnum\x > \b{
                            }  
    \else {

\draw[magenta] (162:2)--(-1,{1.5*\x/\b-1.5})  -- (1,{1.5*\x/\b-1.5})-- (18:2);
\filldraw (-1,{1.5*\x/\b-1.5}) circle (2pt) (1,{1.5*\x/\b-1.5}) circle (2pt);
}
\fi
}
\foreach \x in {1,3,...,\b}{
\ifnum\x > \c{
                            }  
    \else {
\draw[thick] (-1,{1.5*\x/\b-1.5})  -- (-1,{1.5*(\x+1)/\b-1.5});
}
\fi       
}
\foreach \x in {2,4,...,\b}{
\ifnum\x > \c{
                            }  
    \else {
\draw[thick] (1,{1.5*\x/\b-1.5})  -- (1,{1.5*(\x+1)/\b-1.5});
}
\fi       
}

\filldraw (234:2) node[below]{$x_2$} circle (2pt) (306:2) node[below]{$x_1$} circle (2pt) (18:2) node[left]{$x_5$} circle (2pt) (90:2) node[below]{$x_4$} circle (2pt) (162:2) node[right]{$x_3$}circle (2pt);

\end{tikzpicture}
    \caption{Example of an extremal graph for Theorem~\ref{thm:C_4_C_5}.}
    \label{combined}
\end{figure}

Now we are going to prove by induction that   $\ex_{\p}(n,C_5,\{C_4\}) \leq n-4$. The proof proceeds by induction on $n$ with the base cases \sloppy $\ex_{\p}(4,C_5,\{C_4\})=0$, $\ex_{\p}(5,C_5,\{C_4\})=1$ and $\ex_{\p}(6,C_5,\{C_4\})=1$. Let $G$ be an $n$ vertex, $C_4$ free planar graph with $n \ge 7$. Without loss of generality we may assume $G$ is connected.  Consider an embedding of $G$ on the plane. To prove an upper bound for Theorem~\ref{thm:C_4_C_5}, we take a planar embedding of $G$ and consider two cases.

\textbf{Case 1:} \emph{All pentagons in  $G$ are faces}.  
Remove  an edge from each triangular face. Observe that two triangular faces do not share an edge since $G$ is $C_4$-free. 
Let us denote the resulting graph by $G'$. 
The graph $G'$  is a planar graph with $n$ vertices, such that each face has at least 5 edges.  
Moreover, the total number of faces of $G'$ is at least the number of pentagonal faces in $G$. 
Thus if $f$ is the number of faces of the graph $G'$ and $e$ is the number of edges, then $5f\leq 2e$. Using  Euler's formula, $f+n=e+2$, we get $f\leq \frac{2}{3}n-\frac{4}{3}$. 
Thus we have that the number of $C_5$ copies in $G$ is at most $n-4$, since $n\geq 7$.

\textbf{Case 2:} \emph{There is a pentagon in  $G$ which is not a face}. 
Let $P$ be a non-facial pentagon in $G$. The pentagon $P$, cuts the plane into two regions. 
Let us denote the subgraph of $G$ in the inner and outer regions of $P$ by $G_{1}$ and $G_2$, respectively, where both graphs include the vertices of the pentagon. Assume that $G_1$ and $G_2$ have $n_1$ and $n_2$ vertices, respectively. 
Thus we have $n=n_1+n_2-5$, where $n_1$ and $n_2$ are both non-zero and less than $n$.

By the induction hypothesis, the number of pentagons in $G_{1}$ and $G_{2}$ is at most $n_1-4$ and $n_2-4$, respectively.  
Even more there are no pentagons crossing $P$ in $G$, since $G$ is $C_4$-free, hence it follows that every pentagon of $G$ is a pentagon of $G_{1}$ or  $G_{2}$. 
Since the pentagon $P$ is in both graphs, we get that the number of pentagons in $G$ is at most $n_1-4+n_2-4-1=n+5-9=n-4$, completing the proof. 
\end{proof}

     \section[]{The Order of Magnitude of the Maximum Number of Cycles of Length $k$ in a $C_4$-free Planar Graph} \label{secck}
       
We have seen in Corollary~\ref{ckbound} that $\ex_{\p}(n,C_k,\emptyset) = \Theta(n^{\floor{k/2}})$ follows immediately from Theorem~\ref{planargen}.  We now consider the case when $C_4$ is forbidden and prove Theorem~\ref{c4cycle} which states that $\ex_{\p}(n,C_k,\{C_4\}) = \Theta(n^{\floor{k/3}})$.
\begin{proof}
For the construction we take a cycle $C_k$ and find an induced matching of size $\floor{k/3}$.  
Next we replace each edge in this matching with $\frac{n-k}{2\floor{k/3}}$ edges each adjacent to the same pair of vertices as the original edge.  
This graph clearly has $\Theta(n^{\floor{\frac{k}{3}}})$ copies of $C_k$ and at most $n$ vertices, when $n$ is sufficiently large.  

We will now prove the upper bound.
It is well-known that every planar graph contains a vertex of degree at most $5$.  
It was proved in~\cite{mdegree} that a $C_4$-free planar graph with minimum degree at least 2 contains an edge $\{x,y\}$ such that $d(x)+d(y)$ is at most 8.  
This result was improved to 7 in~\cite{mdeg7}, which is best possible. 
Note that, to prove Theorem~\ref{c4cycle}, we may assume the graph has no vertices of degree one, since such vertices do not contribute to any $k$-cycles.  
We will distinguish cases based on the value of $k$ modulo~3. 
When $k$ is equal to $0$ or $1$ modulo~$3$, the result can be proved using the fact that a planar graph contains a vertex of degree at most 5.  
We present here the proof in the case when $k = 2\pmod{3}$, the other cases are similar but require only the fact that there is a vertex of bounded degree.  

Suppose $k = 3m+2$ for some integer $m \ge 1$ and that $G$ is an $n$-vertex, $C_4$-free planar graph with no vertex of degree at most 1.  
Let us label the vertices of $C_k$ as $v_1,v_2,\dots,v_{3m+2}$, consecutively. 
Applying the result of~\cite{mdeg7} we find an edge $\{u,v\}$ such that $d(u)+d(v) \le 7$. 
Let the vertices $u$ and $v$ correspond to $v_1$ and $v_2$ in the $k$-cycle. 
Next choose edges to represent the edges of the cycle $\{v_4,v_6\}$, $\{v_8,v_9\}$, $\{v_{11},v_{12}\}$ and so on.  
Since our graph is $C_4$-free, there are is at most one way to choose the vertices $v_3$, $v_6$, $v_9$ and so on.
It follows that we have at most on the order of $n^{m-1}$ copies of the cycle $C_k$ in the graph.  
Thus by iteratively removing an edge whose vertices have degree summing to at most 7, and then deleting all vertices of degree at most 1 which are created,  we find a total of at most order $n^m$ cycles of length $k$ in $G$, completing the proof.\end{proof}

          \section[]{The Number of Trees in Planar Graph with no even cycle of length at most $2\ell$}

 In this section we prove Theorem~\ref{eventree}, namely that
\[
\ex_{\mathcal{P}}(n,T,\{C_4,C_6,\dots,C_{2\ell}\})=\Theta(n^{\beta_{\ell}(T)}).
\]

\begin{proof}[Proof of Theorem \ref{eventree}] 
First we will show that the result is true for paths. We will make use of the following theorem due to Lam and Verstraete.
\begin{theorem}[\cite{lam2005note}]
\label{thanksTao}
       Let $G$ be a graph containing no even cycles of length at most $2\ell$. 
       There exists a constant $D_\ell$ such that for any $v,u$ vertices of $G$ and $k \leq \ell$ a positive integer, the number of paths from $v$ to $u$ of length $k$ in $G$ is at most $D_\ell$.
       \end{theorem}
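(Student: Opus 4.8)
The plan is to prove the statement by induction on the path length $k$. For $j \le \ell$ let $D_j$ denote the maximum, taken over all graphs with no even cycle of length at most $2\ell$ and over all pairs of vertices, of the number of $u$--$v$ paths of length exactly $j$; the goal is to show that each such $D_j$ is finite, after which one takes $D_\ell := \max_{1 \le j \le \ell} D_j$ to obtain a single constant valid simultaneously for all lengths $k \le \ell$. The base case $k = 1$ is immediate, since at most one edge joins $u$ to $v$, so $D_1 = 1$. Fix $k$ with $2 \le k \le \ell$ and assume $D_1, \dots, D_{k-1}$ are finite.

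The first ingredient is a cycle-formation observation. Suppose $P$ and $Q$ are distinct $u$--$v$ paths, each of length $k$, that share no internal vertex. Then in $P \cup Q$ the endpoints $u$ and $v$ have degree $2$ (their two incident edges lead into the disjoint internal vertex sets of $P$ and $Q$, hence are distinct), every internal vertex has degree $2$, and the graph is connected; hence $P \cup Q$ is a simple cycle, necessarily of length $2k$. As $2k$ is even with $4 \le 2k \le 2\ell$, this cycle is one of the forbidden $C_4, C_6, \dots, C_{2\ell}$, a contradiction. Therefore, fixing one $u$--$v$ path $P_0$ of length $k$, every other $u$--$v$ path of length $k$ must pass through one of the $k-1$ internal vertices of $P_0$.

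The second ingredient bounds the number of length-$k$ $u$--$v$ paths through a fixed internal vertex $z$ (with $z \ne u, v$). Any such path meets $z$ in some position $i \in \{1, \dots, k-1\}$, and splits there into a $u$--$z$ path of length $i$ and a $z$--$v$ path of length $k - i$. Since $i$ and $k - i$ are both at most $k - 1 < \ell$, the induction hypothesis bounds the number of choices for these two pieces by $D_i$ and $D_{k-i}$ respectively, so the number of length-$k$ paths through $z$ is at most $\sum_{i=1}^{k-1} D_i D_{k-i} \le (k-1)\bigl(\max_{1 \le j \le k-1} D_j\bigr)^2$. Combining this with the first ingredient and summing over the $k-1$ internal vertices of $P_0$ gives
\[
D_k \;\le\; 1 + (k-1)^2 \Bigl(\max_{1 \le j \le k-1} D_j\Bigr)^2,
\]
which is finite; this closes the induction and yields a bound depending only on $\ell$.

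The step I expect to require the most care is the cycle-formation observation: one must verify that the union of two internally disjoint $u$--$v$ paths of equal length is genuinely a simple cycle (checking $2$-regularity, connectivity, and that the edges at $u$ and at $v$ are not repeated) and that its length $2k$ lands in the forbidden range $\{4, 6, \dots, 2\ell\}$, which is exactly where the hypotheses $k \ge 2$ and $k \le \ell$ are used. Once this is in place, the reduction ``every other path hits an internal vertex of the fixed path $P_0$'' is what converts the global count into the per-vertex count that the induction controls, and the remaining work is only the bookkeeping needed to track a single constant $D_\ell$ across all lengths $k \le \ell$.
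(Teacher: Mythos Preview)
The paper does not prove this statement at all: it is quoted as a known result of Lam and Verstra\"ete~\cite{lam2005note} and used as a black box in the proof of Theorem~\ref{eventree}. So there is no ``paper's own proof'' to compare against.

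Your argument is correct and self-contained. The induction on $k$, the observation that two internally vertex-disjoint $u$--$v$ paths of the same length $k$ with $2 \le k \le \ell$ produce a forbidden $C_{2k}$, and the splitting of any remaining path at a shared internal vertex into two strictly shorter paths all work as stated; the resulting recursion $D_k \le 1 + (k-1)\sum_{i=1}^{k-1} D_i D_{k-i}$ (which you relax to $1 + (k-1)^2(\max_{j<k} D_j)^2$) gives a finite bound depending only on $\ell$. The only point worth a remark is that in the splitting step you are using that $D_i$ and $D_{k-i}$ are \emph{global} bounds valid for the same graph $G$ (applied to the pairs $(u,z)$ and $(z,v)$), which is exactly how you set up the induction, so this is fine.
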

It is simple to check that  $\beta_{\ell}(P_k) = 1+\floor{\frac{k+\ell-1}{\ell+1}}$.
Now we will prove that \[\ex_{\p}(n,P_k,\{C_4,C_6,\dots,C_{2\ell}\}) = O\left(n^{1+\floor{\frac{k+\ell-1}{\ell+1}}}\right).\]
Let $G$ be an $n$-vertex planar graph containing no even cycle of length at most $2\ell$, for each $k$-vertex path $v_1,v_2,\dots,v_{k+1}$ in $G$. 
Suppose $k\geq 2$ otherwise we are already done, and consider the edges $\{v_{(\ell+1)i+1},v_{(\ell+1)i+2}\}$ for $i=0,1,\dots,\floor{\frac{k+\ell-1}{\ell+1}}-1$ and the edge $\{v_k,v_{k+1}
\}$. 
To bound the number of paths, we notice that, we have a linear number of choices for each of these edges, and in total the number of choices is of order $n^{1+\floor{\frac{k+\ell-1}{\ell+1}}}$. 
After choosing the edges, by Theorem~\ref{thanksTao} there is a constant number of ways to add the paths between two consecutive edges. 
Therefore $\N(G,P_k) = O\left(n^{1+\floor{\frac{k+\ell-1}{\ell+1}}}\right).$

Now let $T$ be any tree. Let us partition the vertex set $V(T)$ into five sets $A_1$, $A_2$, $A_2'$, $A_2''$ and $A_{\geq 3}$. First we partition the set of vertices of degree not equal to 2 as follows.
        \[A_1=\Big\{v\in V(T) \Big| d(v)=1\Big\} \mbox{ and }A_{\geq3}=\Big\{v\in V(T) \Big| d(v)\geq  3 \Big\}.\]
         In particular, $A_1$ is the set of leaves of the tree $T$. Now we will partition the set of vertices of degree equal to 2 into three sets.  Let  
        \[A_2=\Big\{v\in V(T) \Big| d(v)=2 \mbox{ and there is no vertex of $A_\geq{3}$ at distance less than } \ell \mbox{ from } v  \Big\}.\]
        
        Now consider every path $P$ in $T$ such that: 
        \item[$(i)$] Both end vertices of $P$ have degree at least 3 in $T$.
        \item[$(ii)$] The length of $P$ is at least $\ell+1$, but at most $2\ell-1$.
        \item[$(iii)$] Every internal vertex of $P$ has degree 2 in $T$.
        For each such path let $f(P)$ be the middle vertex of $P$, if $P$ has odd length, take either of the two middle vertices.
        
        Let $A_2'$ be the set of consisting of the vertices $f(P)$ for the paths $P$ defined above. Finally, define
        \[A_2''=\Big\{v\in V(T) \Big| d(v)=2, v\notin A_2 \cup A_2'\Big\}.\]

      Let $F$ be the subgraph of $T$ induced by the vertex set $V_1=A_1\cup A_2 \cup A_2'$. Note that $F$ is a path forest and suppose  $F=P_{i_1} \cup P_{i_2} \cup \cdots \cup P_{i_t}$ for paths $P_{i_j}$, $1 \le j \le t$. 
      
      Now we will show that $\beta_\ell(F)=\beta_\ell(T)$.
      %%%%%%%%%%%%%%%%%%%%%%%%%%%%%%%%%%%%%%%%%%%%% This must be adapted to \beta_\ell
      Take a set $S$ of vertices and paths which is a witness for the value of $\beta_\ell(T)$ in $T$.  
      Suppose $S$ contains a path $P = x_1,x_2,\dots,x_\ell$ using at least one vertex from $A_2''$. Note that it is not possible for this path to be fully contained in $A_2''$. Indeed, if it was contained in $A_2''$ we would be able to find an $i$ such that $x_i$ is at distance $\ell-1$ from a vertex $v \in A_{\geq 3}$ and $x_{i+1}$ is at distance $\ell-1$ from a vertex $u \in A_{\geq 3}$ and so $P$ would be contained in a  path of length $2\ell+1$ from $v$ to $u$. It follows that the middle vertex of this path must be a vertex of $P$.
      So we may replace the choice of $P$ in $S$ with a vertex (leaf) $f(P)$ in $F$.  
      We obtain that $\beta_\ell(F) \ge \beta_\ell(T)$. 
      
      Next we show $\beta_\ell(F) \le \beta_\ell(T)$.  
      Take a set $S$ of vertices and paths which is a witness for $\beta_\ell(F)$ in $F$. 
      If $P$ is a length $\ell$ path in $S$, by definition, no end vertex is a leaf in $F$, so $P$ is still a length $\ell$ path in $T$ such that every vertex has degree 2. 
      If $v$ is a leaf in $F$, but is no longer a leaf in $T$, then $v$ must have degree 2 in $T$, and there are two possibilities.
      Suppose $v \in A_2''$, then there exists a path $P$ of length at least $\ell+1$ with internal vertices of degree 2 containing $v$. In this case we may replace the choice of $v$ in $S$ by a subpath of $P$ of length $\ell-1$ without using the end vertices of $P$.
      Suppose $v$ is in $A_2$, since one neighbor of $v$ is not in $F$, it must be in $A_{\geq 3} \cup A_2''$, but by definition of $A_2$, then $v$ must have a neighbor in $A_2$ and so $v$ is at distance $\ell$ of $A_{\geq 3}$. Let $u$ be the closest neighbor of $v$ in $u$, and let $P$ be the length $\ell$ path from $v$ to $u$, then we may replace the choice of $v$ in $S$ by $P'$ the path obtained from $P$ by deleting $u$. 
      It follows that $\beta_2(F) \le \beta_2(T)$.
      %%%%%%%%%%%%%%%%%%%%%%%%%%%%%%%%%%%%%%%%%%%%%%
      
      Now let $G$ be an $n$-vertex planar graph containing no even cycle of length at most $2\ell$ and fix a copy of $F$ in $G$. By Lemma~\ref{Constant-Lemma} since every leaf of $T$ is already fixed, we have a bounded number of choices to embed the vertices of $A_{\geq 3}$ in $G$ such that together with the copy of $F$ the embedding can be completed to a copy of $T$. 
      For any given embedding of $F$ and $A_{\geq 3}$, let $x \in A_2''$. By the definition of $A_2''$, there is a vertex $a \in A_{\geq 3}$ with distance less than $\ell$ to $x$. 
      If there are two choices for $a$, pick one which is closest to $x$, on the branch from $x$ that does not contain $a$, and let $b$ be the closest vertex of $V \setminus A_2''$. We show that the distance between $a$ and $b$ is at most $\ell$.
      Suppose by contradiction $b$ is at distance more than $\ell$ from $a$, then pick $c$ to be the vertex between $x$ and $b$ at distance exactly $\ell$ from $a$. 
      It follows that $c \in A_2''$, so there is a vertex $d \in A_{\geq 3}$ in the same branch from $x$ as $b$ and $c$, at distance at most $\ell$ from $c$. 
      Hence the path from $a$ to $d$ has length at most $2\ell-1$ and its middle vertex $y$ is in $F$ and it is closer to $x$ than $y$, but this contradicts the choice of $b$ since $y$ is also in the branch from $x$ not containing $a$.    
      By Theorem~\ref{thanksTao}, there is a constant number of choices to embed $x$ in $G$ such that the embedding can be completeted to $T$, since for each such embedding we have a path of length at most $\ell$ form the corresponding vertices of $a$ and $b$ in $G$.
        \end{proof}       
       
  \bigskip
\noindent \textbf{Acknowledgements.}  We thank Kevin Hendrey for providing a reference to the fourth author for the fact that a $C_4$-free planar graph with no leaf contains an edge whose degree sum is bounded by a constant. We also thank Tony Huynh for providing us with the reference~\cite{eppstein}.  

The research of the first, the third and the fifth authors is partially supported by the National Research, Development and Innovation Office -- NKFIH, grant K 132696. 
The research of the third author is partially supported by  Shota Rustaveli National Science Foundation of Georgia SRNSFG, grant number DI-18-118.   The research of the fourth author is supported by IBS-R029-C1.

\end{document}